\newtheorem{thm}{Theorem}[section]
\newtheorem{lem}[thm]{Lemma}
\newtheorem{cor}[thm]{Corollary}
\theoremstyle{definition}
\newtheorem{prob}[thm]{Problem}
\newtheorem*{rem}{Remark}
\DeclareMathOperator{\insep}{insep}
\DeclareMathOperator{\Res}{Res}
\newcommand{\abs}[1]{\left\lvert#1\right\rvert}
\edef\csname f\@Alph\@tempcnta\endcsname{\noexpand\mathfrak{\@Alph\@tempcnta}}
\edef\csname l\@Alph\@tempcnta\endcsname{\noexpand\mathbb{\@Alph\@tempcnta}}
\edef\csname c\@Alph\@tempcnta\endcsname{\noexpand\mathcal{\@Alph\@tempcnta}}
\title{On quadratic approximation for hyperquadratic continued fractions}
\author[K.~Ayadi]{Khalil Ayadi}
\author[T.~Ooto]{Tomohiro Ooto}
\address{Department of Mathematics Faculty of Sciences, University of Sfax, Tunisia}
\email{ayedikhalil@yahoo.fr}
\address{Tecnos Data Science Engineering, Inc., 27F., Tokyo Opera City, 3-20-2, Nishishinjuku, Shinjuku-ku, Tokyo, 163-1427, Japan}
\email{ooto.tomohiro@gmail.com}
\subjclass[2010]{primary 11J61; secondary 11J68}
\keywords{Diophantine approximation, positive characteristic.}
\begin{document}

\begin{abstract}
	We study quadratic approximations for two families of hyperquadratic continued fractions in the field of Laurent series over a finite field.
	As the first application,  we give the answer to a question of the second author concerning Diophantine exponents for algebraic Laurent series.
	As the second application, we determine the degrees of these families in particular case.
\end{abstract}

\maketitle


\section{Introduction}\label{sec:intro}

Let $n \geq 1$ be an integer and $\xi \in \lR$.
We denote by $w_n(\xi)$ (resp.\ $w_n ^ {*}(\xi)$) the supremum of the real numbers $w$ (resp.\ $w ^ {*}$) which satisfy
\begin{equation*}
	0 < \abs{P(\xi)} \leq H(P)^{- w} \quad (\text{resp.\ } 0 < \abs{\xi - \alpha} \leq H(\alpha)^{- w ^ {*} - 1})
\end{equation*}
for infinitely many integer polynomials $P(X)$ of degree at most $n$ (resp.\ algebraic numbers $\alpha \in \lC$ of degree at most $n$).
Here, $H(P)$ is defined to be the maximum of the absolute values of the coefficients of $P(X)$ and $H(\alpha)$ is equal to $H(Q)$, where $Q(X)$ is the minimal polynomial of $\alpha$ over $\lZ$.
The functions $w_n$ and $w_n ^ {*}$ are called \textit{Diophatine exponents}.
Liouville  proved that $w_1 (\alpha) \leq \deg \alpha - 1$ for any algebraic real number $\alpha$.
Roth \cite{Roth55} improved Liouville Theorem, that is, he showed that $w_1 (\alpha) = 1$ for all algebraic irrational real numbers $\alpha$.
For higher Diophantine exponents, from the Schmidt Subspace Theorem, it is known that
\begin{equation*}
	w_n (\alpha ) = w_n ^ {*} (\alpha ) = \min \{ n, d-1 \},
\end{equation*}
where $n \geq 1$ is an integer and $\alpha$ is an algebraic real number of degree $d$ (see \cite[Section 3]{Bugeaud04}).

Let $p$ be a prime, $q=p^s$, where $s \geq 1$ is an integer and $\lF_q$ be the finite field containing $q$ elements.
We denote by $\lF_q [T], \lF_q (T), $ and $\lF_q ((T ^ {- 1}))$ respectively the ring of polynomials, the field of rational functions, the field of Laurent series in $T ^ {- 1}$ over $\lF_q$.
We consider analogues of Diophantine exponents $w_n$ and $w_n ^ {*}$ for Laurent series (see Section \ref{sec:notation} for the precisely definition).
As an analogue of Liouville Theorem, Mahler \cite{Mahler49} showed $w_1 (\alpha) \leq \deg \alpha - 1$ for any algebraic Laurent series $\alpha \in \lF_q((T^{-1}))$.
However, there exist counter examples of analogue of Roth Theorem for Laurent series over a finite field.
Let $r_1 = p ^ {t_1}$, where $t_1 \geq 1$ is an integer.
He proved that the Laurent series $\alpha_1 := \sum_{n = 0}^{\infty}T ^ {- r_1 ^ n}$ is algebraic of degree $r_1$ with $w_1 (\alpha_1) = r_1 - 1$.
After that many authors investigated rational approximations (e.g.\ \cite{Firicel13, Mathan92, Schmidt00, Thakur99}) and algebraic approximations (e.g.\ \cite{Ooto18, Thakur11, Thakur13}) for algebraic Laurent series.
In particular, the second author (\cite{Ooto18}) proved that, for any rational number $w > 2n-1$, there exists an algebraic Laurent series $\alpha \in \lF_q((T^{-1}))$ such that
\begin{equation*}
	w_1(\alpha) = w_1^{*}(\alpha) = \ldots = w_n(\alpha) = w_n^{*}(\alpha) = w.
\end{equation*}
The first purpose of this paper is to consider Problem 2.2 in \cite{Ooto18}.

\begin{prob}\label{prob:main1}
	Is it true that
	\begin{equation*}
		w_n(\alpha) = w_n^{*}(\alpha)
	\end{equation*}
	for an integer $n \geq 1$ and an algebraic Laurent series $\alpha \in \lF_q((T^{-1}))$?
\end{prob}

Note that it is well-known and easy to see that $w_1(\xi) = w_1^{*}(\xi)$ for all $\xi \in \lF_q((T^{-1}))$.
We give a negative answer of Problem \ref{prob:main1} for any $n \geq 2$ and $q \geq 4$.

\begin{thm}\label{thm:neq_alg}
	Let $n \geq 2$ be an integer, $q = p^s$, and $r = q^t$ with $s, t \geq 1$ be integers and 
	\begin{equation*}
		r \geq \frac{3n + 2 + \sqrt{9n^2 + 4n + 4}}{2}.
	\end{equation*}
	If $q \geq 4$, then there exist algebraic Laurent series $\alpha \in \lF_q ((T ^ {- 1}))$ with $\deg \alpha = r+1$ such that
	\begin{equation*}
		w_n (\alpha) \neq w_n ^ {*} (\alpha)
	\end{equation*}
\end{thm}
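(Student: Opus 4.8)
\emph{Proof proposal.} The plan is to take $\alpha$ to be a suitable member of one of the two hyperquadratic families treated in the later sections: a continued fraction $\alpha = [a_0; a_1, a_2, \dots]$ over $\lF_q$ whose partial quotients obey the self-similar recursion forced by a functional equation $\alpha = (A\alpha^{r} + B)/(C\alpha^{r} + D)$ with $A,B,C,D \in \lF_q[T]$, these being chosen so that $M_\alpha(X) := CX^{r+1} + DX^{r} - AX - B$ is irreducible over $\lF_q(T)$, whence $\deg\alpha = r+1$. The hypothesis $q \ge 4$ provides enough constants in $\lF_q^{\times}$ to realise all the required partial quotients and to prevent cancellation of leading coefficients in the identities below. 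Since $\alpha^{r} = [a_0^{r}; a_1^{r}, a_2^{r}, \dots]$ while $\alpha$ is the image of $\alpha^{r}$ under a fixed M\"obius transformation over $\lF_q(T)$, a subsequence $(p_{k}/q_{k})$ of the convergents of $\alpha$ is obtained from an earlier one by $x \mapsto (Ax^{r}+B)/(Cx^{r}+D)$; hence $\deg q_{k+1}\approx r\deg q_{k}$, the partial quotients away from this subsequence are of degree one, and $w_1(\alpha) = w_1^{*}(\alpha)$ can be read off from this growth.

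First I bound $w_n(\alpha)$ from below, already for $n = 2$. For each convergent $p/q$ lying just before one of the large partial quotients, substitute $\alpha^{r} = (p/q)^{r} + (\alpha - p/q)^{r}$ into $M_\alpha(\alpha) = 0$ and clear denominators; this yields, for infinitely many $(p,q)$, a polynomial $P \in \lF_q[T][X]$ with $\deg_X P \le 2$ satisfying
\begin{equation*}
	P(\alpha) = (q\alpha - p)^{r}\cdot(\text{bounded unit}) + R(\alpha), \qquad \abs{R(\alpha)}\le\abs{(q\alpha - p)^{r}},
\end{equation*}
and $H(P) \asymp \abs{q}^{r}$. Since $\log\abs{q\alpha - p} \asymp -(\deg q + \deg a')$, where $a'$ is the partial quotient following $p/q$, optimising over these convergents gives $w_2(\alpha) \ge W_-(r)$ for an explicit $W_-(r)$, and hence $w_n(\alpha) \ge W_-(r)$ for every $n \ge 2$ by monotonicity of $w_n$ in $n$. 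Note that, $r$ being a power of $p$, $(qX-p)^{r} = q^{r}X^{r}-p^{r}$ has vanishing derivative, so $P'(\alpha) = R'(\alpha)$ is small: $\alpha$ is a \emph{near-multiple} root of $P$, and the genuine roots of $P$ near $p/q$ lie on a circle of radius $\asymp \abs{R(\alpha)}^{1/r}$, far larger than $\abs{P(\alpha)}$, so these polynomials contribute nothing to $w_n^{*}(\alpha)$. This Frobenius-induced loss of multiplicity is the structural reason the two exponents separate.

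Next I bound $w_n^{*}(\alpha)$ from above. Let $\beta$ be algebraic of degree $e \le n$ with $\abs{\alpha-\beta}$ small. Two inputs combine: (i) a Liouville-type estimate — $\Res(M_\alpha,M_\beta)$ is a non-zero element of $\lF_q[T]$, so expanding it as $a_\alpha^{e}a_\beta^{\,r+1}\prod_{i,j}(\alpha_i-\beta_j)$, bounding the conjugates of $\alpha$ by a constant and isolating the single near-vanishing factor, one gets $\abs{\alpha-\beta} \gg H(\beta)^{-(r+1)}$; and (ii) the fact that a very good degree-$\le n$ approximant $\beta$ must be close to the convergents $p_k/q_k$ of $\alpha$, so those $q_k$ are good denominators for $\beta$ as well, which together with Mahler's bound $w_1(\beta)\le e-1$ and the prescribed growth of $\deg q_k$ limits how small $\abs{\alpha-\beta}$ can be relative to $H(\beta)$. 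Combined with the quadratic-approximation estimates of the later sections, this gives $w_n^{*}(\alpha) \le W_+(n,r)$ for an explicit $W_+$ that, crucially, depends on $n$.

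Finally, the comparison $W_-(r) > W_+(n,r)$ unwinds, after completing the square, to $r^{2} - (3n+2)r + 2n \ge 0$, i.e.\ exactly to $r \ge \tfrac{3n+2+\sqrt{9n^{2}+4n+4}}{2}$, whence $w_n(\alpha)\neq w_n^{*}(\alpha)$. The main obstacle is the lower bound for $w_n$: one must track the leading coefficients through the substitution and the clearing of denominators to be sure that neither $H(P)$ nor $\abs{P(\alpha)}$ is degraded by cancellation, and that the resulting $P$ is genuinely of degree two and not reducible in a way that would spoil the estimate — which is precisely what the hypothesis $q \ge 4$ and the explicit shape of the two families are arranged to guarantee.
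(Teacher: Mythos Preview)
Your approach diverges from the paper's and contains a concrete gap in the lower-bound step.

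\textbf{What the paper actually does.} It takes $\alpha=\Theta_{0}^{t}(\lambda)$ with $k=0$ (the condition $q\ge 4$ is used only to pick $\lambda\in\lF_q^{*}\setminus\{1,2\}$). All partial quotients of this $\alpha$ have degree~$1$; there are no ``large partial quotients'' anywhere. The paper then builds a single sequence of \emph{ultimately periodic} continued fractions $\alpha_n$ agreeing with $\alpha$ on a long initial block, and feeds the four asymptotics $H(\alpha_n)$, $\abs{\alpha-\alpha_n}$, $\abs{\alpha_n-\alpha_n'}$, $H(\alpha_{n+1})/H(\alpha_n)$ into Lemma~\ref{lem:bestquad}. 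That lemma outputs \emph{both} exact values $w_n^{*}(\alpha)=r-1$ and $w_n(\alpha)=r$ simultaneously, and its hypothesis $2d\theta\le(d-2+\delta)\delta$ with $\theta=r$, $\delta=r-d$ is precisely $r^{2}-(3d+2)r+2d\ge 0$, i.e.\ the stated threshold on $r$. The gap $w_n-w_n^{*}=1$ is the parameter $\varepsilon$ in Lemma~\ref{lem:bestquad}: it measures how close the \emph{Galois conjugate} $\alpha_n'$ of the quadratic approximant is to $\alpha$, so that $\abs{P_n(\alpha)}\asymp H(P_n)\abs{\alpha-\alpha_n}\abs{\alpha_n-\alpha_n'}$ gains an extra factor $\abs{\alpha_n-\alpha_n'}\asymp H(\alpha_n)^{-1}$.

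\textbf{Where your argument breaks.} Carry out your substitution $\alpha^{r}=(p/q)^{r}+(\alpha-p/q)^{r}$ in $C\alpha^{r+1}+D\alpha^{r}-A\alpha-B=0$ and clear denominators: you get
\[
(Cp^{r}-Aq^{r})\,\alpha+(Dp^{r}-Bq^{r})\;=\;-(C\alpha+D)(q\alpha-p)^{r},
\]
whose left-hand side is \emph{linear} in $\alpha$, not of degree~$2$. So this construction bounds $w_1$, not $w_2$; and for $\Theta_0^{t}(\lambda)$ one has $w_1=1$, so nothing is gained. Your ``near-multiple root via vanishing Frobenius derivative'' picture therefore does not apply to any polynomial you have actually produced. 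The genuine mechanism is the one above: a quadratic $\beta$ with $\beta'$ also close to $\alpha$, coming from periodicity of the tail, not from Frobenius on a convergent. Finally, your upper bound for $w_n^{*}$ leans on ``the quadratic-approximation estimates of the later sections'', which is exactly the content of Lemma~\ref{lem:bestquad}; without it the Liouville/resultant bound only gives $w_n^{*}\le r$, which does not separate from $w_n\le r$.
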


Let $\alpha$ be in $\lF_q ((T ^ {- 1}))$ and $r = p ^ t$, where $t \geq 0$ is an integer.
We call $\alpha$ \textit{hyperquadratic} if $\alpha$ is irrational and there exists $(A, B, C, D) \in \lF_q [T] ^ 4 \setminus \{ {\bf 0} \}$ such that
\begin{equation}\label{eq:hyperquad}
	A \alpha^{r+1} + B \alpha ^ r + C \alpha + D = 0.
\end{equation}
For example, quadratic Laurent series are hyperquadratic, $\alpha_1$ is hyperquadratic and satisfies $T \alpha_1^r - T \alpha_1 + 1 = 0$.
Baum and Sweet \cite{Baum76} started a study of hyperquadratic continued fractions in characteristic two.
In 1986, Mills and Robbins \cite{Mills86} showed the existence of hyperquadratic continued fractions with all partial quotient of degree one in odd characteristic with a prime field.
We  refer the reader to \cite{Lasjaunias172} for a survey of recent works of hyperquadratic continued fractions.
We observe that the degree of hyperquadratic Laurent series which satisfies \eqref{eq:hyperquad} is less than or equal to $r+1$.
However, the exact degree of these is open.
In this paper, we discuss the following problem.

\begin{prob}
	Given a hyperquadratic Laurent series $\alpha \in \lF_q((T^{-1}))$, determine the exact degree of $\alpha$.
\end{prob}

In order to determine the exact degree, we approach to quadratic approximations.
Since $w_2(\alpha) \leq \deg \alpha - 1$ for $\alpha$ as in \eqref{eq:hyperquad} (see Lemma \ref{lem:alg}), if $w_2(\alpha) > r-1$, then $\deg \alpha = r+1$.
In this paper, we deal with two families for hyperquadratic continued fractions introduced in \cite{Lasjaunias02, Lasjaunias17}.
We determine the exact degree of these families in particular cases.

This paper is organized as follows.
In Section \ref{sec:notation}, we recall notations which are used in this paper.
In Section \ref{sec:main}, we state main results of quadratic approximation for hyperquadratic continued fractions and applications of the results.
In Section \ref{sec:lemma}, we give lemmas for the proof of main results.
In Section \ref{sec:proof}, we prove the main results.


\section{Notations}\label{sec:notation}

A nonzero Laurent series $\xi \in \lF_q((T ^ {- 1}))$ is represented by $\xi = \sum_{n = N}^{\infty}a_n T ^ {- n}$, where $N \in \lZ, a_n \in \lF_q,$ and $a_N \neq 0$.
The field $\lF_q ((T ^ {- 1}))$ has a non-Archimedean absolute value $\abs{\xi} = q ^ {- N}$ and $\abs{0} = 0$.
The absolute value can be uniquely extended to the algebraic closure of $\lF_q ((T ^ {- 1}))$ and we also write $\abs{\cdot}$ for the extended absolute value.

The {\itshape height} of $P(X) \in (\lF_q [T])[X]$, denoted by $H(P)$, is defined to be the maximum of the absolute values of the coefficients of $P(X)$.
For $\alpha \in \overline{\lF_q (T)}$, there exists a unique non-constant, irreducible, primitive polynomial $P(X) \in (\lF _q [T]) [X]$ whose leading coefficients are monic polynomials in $T$ such that $P(\alpha) = 0$. 
The polynomial $P(X)$ is called the {\itshape minimal polynomial} of $\alpha$.
The {\itshape height} (resp.\ the {\itshape degree}, the {\itshape inseparable degree}) of $\alpha$, denoted by $H(\alpha)$ (resp.\ $\deg \alpha$, $\insep \alpha$), is defined to be the height of $P(X)$ (resp.\ the degree of $P(X)$, the inseparable degree of $P(X)$).
Let $n \geq 1$ be an integer and $\xi$ be in $\lF_q (( T ^ {- 1}))$.
We denote by $w_n (\xi)$ (resp.\ $w_n ^ {*} (\xi)$) the supremum of the real numbers $w$ (resp.\ $w ^ {*}$) which satisfy 
\begin{equation*}
	0 < \abs{P(\xi )} \leq H(P) ^ {- w} \quad (\text{resp.\ } 0 < \abs{\xi - \alpha } \leq H(\alpha) ^ {- w ^ {*} - 1})
\end{equation*}
for infinitely many $P(X) \in (\lF_q [T]) [X]$ of degree at most $n$ (resp.\ $\alpha \in \overline{\lF_q (T)}$ of degree at most $n$).

Let $\alpha \in \overline{\lF_q (T)}$ be a quadratic number.
If $\insep \alpha = 1$, let $\alpha ' \neq \alpha$ be the Galois conjugate of $\alpha$.
If $\insep \alpha = 2$, let $\alpha ' = \alpha $.

A Laurent series $\xi \in \lF_q ((T ^ {- 1}))$ can be expressed as a continued fraction:
\begin{equation*}
	\xi = a_0 + \cfrac{1}{a_1 + \cfrac{1}{a_2 + \cfrac{1}{\cdots }}},
\end{equation*}
where $a_n \in \lF_q [T]$ for all $n \geq 0$ and $\deg _T a_m \geq 1$ for all $m \geq 1$.
We write the expansion $\xi = [a_0, a_1, a_2, \ldots ]$.
The continued fraction expansion of $\xi$ is finite if and only if $\xi$ is in $\lF_q (T)$.
It is known that the continued fraction expansion of $\xi$ is ultimately periodic if and only if $\xi$ is quadratic (see Th\'{e}or\`{e}me 4 in \cite[CHAPITRE IV]{Mathan70}).
We define sequences $(p_n)_{n \geq -1}$ and $(q_n)_{n \geq -1}$ by
\begin{equation*}
	\begin{cases}
		p_{- 1} = 1, \ p_0 = a_0, \ p_n = a_n p_{n - 1} + p_{n - 2},\ n \geq 1,\\
		q_{- 1} = 0,\ q_0 = 1,\ q_n = a_n q_{n - 1} + q_{n - 2},\ n \geq 1.
	\end{cases}
\end{equation*}
We call $(p_n / q_n)_{n \geq 0}$ the {\itshape convergent sequence} of $\xi$ and $p_n / q_n$ the \textit{$n$-th convergent} of $\xi$.

Let  $n \geq 1$ be an integer and $W, V$ be finite words.
We denote by $W \bigoplus V$ the word of concatenation of $W$ and $V$.
We put $W ^ {[n]} := W \bigoplus \cdots \bigoplus W$ ($n$ times) and $\overline{W} := W\bigoplus W\bigoplus \cdots \bigoplus W\bigoplus \cdots $ (infinitely many times).
We write $W ^ {[0]}$ the empty word.

Let $A, B$ be real numbers with $B \neq 0$.
We write $A \ll B$ (resp.\ $A \ll_a B$) if $\abs{A} \leq C \abs{B}$ for some constant (resp.\ some constant depending at most on $a$) $C > 0$.
We write $A \asymp B$ (resp.\ $A \asymp_a B$) if $A \ll B$ and $B \ll A$ (resp.\ $A \ll_a B$ and $B \ll_a A$) hold.


\section{Main results}\label{sec:main}

We recall a family of hyperquadratic continued fractions.
Let $s, t \geq 1$ be integers and we put $q := p ^ s$ and $r := q ^ t$.
Let  $k \geq 0$ be an integer and $\lambda$ be in $\lF_q ^ {*}$.
If $p \neq 2$, then we assume that $\lambda \neq 2$ and put $\mu := 2 - \lambda$.
We define $\Theta_k ^ {t} (\lambda) \in \lF_q ((T ^ {- 1}))$ by
\begin{equation}\label{eq:firstfamily}
	\Theta_k ^ {t} (\lambda)
	=
	\begin{cases}
		[0, T ^ {[k]}, \bigoplus_{i \geq 1} (T, (\lambda T, \mu T) ^ {[(r ^ i - 1) / 2]}) ^ {[k+1]}] & \text{if}\ p \neq 2,\\
		[0, T ^ {[k]} \bigoplus_{i \geq 1} (T, \lambda T ^ {[r ^ i - 1]}) ^ {[k + 1]}] & \text{if}\ p = 2.
	\end{cases}
\end{equation}
Lasjaunias and Ruch \cite{Lasjaunias02} proved that $\Theta_k ^ {t} (\lambda)$ is hyperquadratic and  satisfies the algebraic equation
\begin{equation}\label{eq:firstfamilyequation}
	\begin{cases}
		q_k X ^ {r + 1} - p_k X ^ r + (\lambda \mu ) ^ {(r - 1) / 2} q_{k + r} X - (\lambda \mu) ^ {(r - 1) / 2} p_{k + r} = 0 \quad \text{if}\ p \neq 2,\\
		q_k X ^ {r + 1} - p_k X ^ r + q_{k + r} X - p_{k + r} = 0 \quad \text{if}\ p = 2,
	\end{cases}
\end{equation}
where $(p_n / q_n)_{n \geq 0}$ is the convergent sequence of $\Theta_k ^ {t} (\lambda)$.
Note that if $\lambda = 1$, then $\Theta_k ^ {t} (\lambda)$ is ultimately periodic continued fraction, that is, $\Theta_k ^ {t} (\lambda)$ is quadratic.

Our first main result is the following.

\begin{thm}\label{thm:main1}
	Let $d \geq 2$ be an integer and $\Theta_k ^ {t} (\lambda) \in \lF_q ((T ^ {- 1}))$ be as in \eqref{eq:firstfamily}.
	Assume that $\lambda \neq 1$.
	Then we have
	\begin{gather}\label{eq:w2*}
		w_2 ^ {*} (\Theta_k ^{t} (\lambda)) \geq \max \left\lbrace \frac{r - 1}{k + 1}, r - 1 - \frac{r(r - 1)(r - 4)}{2r(k + 1) + (r - 1)(r - 2)}\right\rbrace ,\\
		\label{eq:w2}
		w_2 (\Theta_k ^ {t} (\lambda)) \geq \max \left\lbrace \frac{r - 1}{k + 1} + 1, r - \frac{r(r - 1)(r - 3)}{2r(k + 1)+(r - 1)(r - 2)}\right\rbrace .
	\end{gather}
	If
	\begin{equation}\label{eq:maincond1}
		k \leq \frac{2(r - 1)}{d + \sqrt{d ^ 2 + 4(2r - 1)d + 4}} - 1,
	\end{equation}
	then we have
	\begin{equation}\label{eq:mainequal1}
		w_n ^ {*} ( \Theta_k ^ {t} (\lambda)) = \frac{r - 1}{k + 1}, \quad w_n (\Theta_k ^ {t} (\lambda)) = \frac{r - 1}{k + 1} + 1
	\end{equation}
	for all integers $2 \leq n \leq d$.
	If
	\begin{equation}\label{eq:maincond2}
		2rd \leq \left( r - 2 - \frac{r(r - 1)(r - 4)}{2r(k + 1) + (r - 1)(r - 2)}\right) \left( r - d - \frac{r(r - 1)(r - 4)}{2r(k + 1) + (r - 1)(r - 2)}\right) ,
	\end{equation}
	then we have
	\begin{gather}\label{eq:mainequal2}
		w_n ^ {*} (\Theta_k ^ {t} (\lambda)) = r - 1 - \frac{r(r - 1)(r - 4)}{2r(k + 1) + (r - 1)(r - 2)},\\
		\label{eq:mainequal22}
		w_n (\Theta_k ^ {t} (\lambda )) = r - \frac{r(r - 1)(r - 3)}{2r(k + 1) + (r - 1)(r - 2)}
	\end{gather}
	for all integers $2 \leq n \leq d$.
\end{thm}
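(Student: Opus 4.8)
The plan is to analyze the continued fraction expansion of $\Theta_k^t(\lambda)$ directly, exploiting the hyperquadratic recursion. First I would establish a precise description of how the partial quotients and the convergents $p_n/q_n$ grow. From the algebraic equation \eqref{eq:firstfamilyequation}, one sees that $q_k \Theta^{r+1} - p_k \Theta^r$ is essentially $(\lambda\mu)^{(r-1)/2}(p_{k+r} - q_{k+r}\Theta)$ (or the $p=2$ analogue), so $\Theta^r(q_k\Theta - p_k)$ is small of a controlled size, namely $\asymp |q_{k+r}|^{-1}|q_k|^{-1}$ up to explicit powers of $T$. This is the engine that produces good quadratic approximations: the polynomial $P(X) = q_k X^{r+1} - p_k X^r + (\lambda\mu)^{(r-1)/2}(q_{k+r}X - p_{k+r})$ has $\Theta$ as a root, but more usefully, reducing it modulo the quadratic $q_k X - p_k$-type relations lets us extract quadratic polynomials (or quadratic algebraic elements) that are very close to $\Theta$. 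Concretely, I expect the truncated continued fractions $[0, a_1, \dots, a_m]$ and their periodic "closures" to furnish both the rational (linear) and the quadratic approximants, and the two maxima in \eqref{eq:w2*}–\eqref{eq:w2} should correspond to two different constructions: one from cutting at the end of a block $T^{[k]}$-type prefix (giving the term $(r-1)/(k+1)$), and one from the self-similar structure forcing long strings of repeated partial quotients (giving the more complicated rational function of $r$ and $k$).

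Second, for the lower bounds \eqref{eq:w2*} and \eqref{eq:w2}, I would make the two constructions explicit. For the first term $(r-1)/(k+1)$: using the functional equation relating $\Theta$ at level $i$ to level $i+1$ (the continued fraction has the form $\Theta = [0, T^{[k]}, \text{periodic-ish tail}]$ where the tail is built by a substitution of ratio $r$), one shows that $\deg q_{k+1}, \deg q_{k+r+1}, \dots$ grow geometrically with ratio roughly $r$, while the "period length" at stage $i$ is $\asymp (k+1) \cdot$(something), yielding approximation exponent $(r-1)/(k+1)$ for the quadratic surd obtained by making the $i$-th block periodic. For the second, more delicate term, I would use the precise block structure $(T, (\lambda T,\mu T)^{[(r^i-1)/2]})^{[k+1]}$: a long run of alternating $\lambda T, \mu T$ means the convergents behave almost like those of a quadratic irrational (a "pseudo-periodic" stretch), and truncating in the middle of such a run gives a quadratic approximant $\alpha$ with $|\Theta - \alpha|$ governed by the ratio of the length of the run to the total degree accumulated, which after summing the geometric-type series produces exactly $r - 1 - \frac{r(r-1)(r-4)}{2r(k+1)+(r-1)(r-2)}$. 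The $w_2$ bounds follow from the $w_2^*$ bounds via the standard relation $w_2(\xi) \geq w_2^*(\xi) + 1$ together with a direct check that the relevant polynomials realize the claimed value; here Lemma \ref{lem:alg} (the $w_2(\alpha) \leq \deg\alpha - 1$ bound) and basic height estimates $H(\alpha') \asymp H(\alpha)$ for quadratic $\alpha$ will be invoked.

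Third, for the equalities \eqref{eq:mainequal1} and \eqref{eq:mainequal2}, I would need matching upper bounds, and this is where the conditions \eqref{eq:maincond1} and \eqref{eq:maincond2} enter. The strategy is: any very good quadratic (or degree-$n$, $n \leq d$) approximation $\alpha$ to $\Theta$ must, by the theory of continued fractions and by a transference/gap argument, essentially come from one of the two families of approximants already constructed — i.e., the convergents force that $\alpha$ is "close to" a truncation of $\Theta$'s continued fraction. One then shows that no truncation does better than the two constructed values, provided $k$ is small enough (resp.\ the quadratic inequality in $d, r, k$ holds) that the geometric series controlling successive approximants does not overlap to create a better exponent. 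Technically this amounts to: (a) a lower bound on $|\Theta - \alpha|$ for $\alpha$ ranging over all algebraic elements of degree $\leq n$ not in the constructed family, obtained by pairing $\alpha$ with a nearby convergent and using that $\Theta \notin \lF_q(T)$ and is not itself quadratic (since $\lambda \neq 1$); and (b) checking that the "budget" inequalities \eqref{eq:maincond1}, \eqref{eq:maincond2} are exactly what is needed for the first family (resp.\ second family) of approximants to dominate. The main obstacle I anticipate is step (a) combined with the bookkeeping in (b): one must carefully rule out improved approximations coming from subtle combinations — e.g., an algebraic element of degree $3 \leq n \leq d$ that is not a quadratic surd but still accidentally very close to $\Theta$ — and the passage from the known rational approximation exponents (essentially $w_1$, controlled by the partial quotient degrees) to genuine quadratic exponents requires a Wronskian/resultant-type non-vanishing argument (controlling $\Res$ or $\Disc$, which is presumably why those operators are declared) to ensure the constructed quadratic polynomials are not degenerate and that distinct approximants are genuinely distinct. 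Getting the exact constants in \eqref{eq:maincond1}–\eqref{eq:maincond2} to line up with the two explicit values in the theorem will be the most calculation-heavy part, but it is mechanical once the structural picture is in place.
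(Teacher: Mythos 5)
Your overall architecture matches the paper's: two explicit families of ultimately periodic continued fractions (one closing with the short period $\overline{\lambda T,\mu T}$, giving the exponent $(r-1)/(k+1)$; one periodizing an entire block, giving the second entry of the max), followed by a criterion that converts a sequence of good quadratic approximants into exact values of $w_n$ and $w_n^{*}$. Note that the paper does not re-derive the upper-bound ``gap'' argument you sketch in your third paragraph: it simply invokes \cite[Lemma 3.19]{Ooto18} (Lemma \ref{lem:bestquad} here), whose hypothesis $2d\theta\le(d-2+\delta)\delta$ with $\theta=r$ is exactly \eqref{eq:maincond1}, resp.\ \eqref{eq:maincond2}; so that part of your plan is sound provided you cite or reprove that lemma rather than reconstructing the transference argument ad hoc.

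There is, however, a concrete error in your second paragraph: you pass from $w_2^{*}$ to $w_2$ via ``the standard relation $w_2(\xi)\ge w_2^{*}(\xi)+1$''. No such relation holds, and it gives the wrong answer here. The correct mechanism (Lemma \ref{lem:bestquad2}) is that for a quadratic approximant $\alpha_j$ with minimal polynomial $P_j$ one has $|P_j(\xi)|\asymp H(P_j)\,|\xi-\alpha_j|\,|\xi-\alpha_j'|$ with $|\xi-\alpha_j'|=|\alpha_j-\alpha_j'|$, so the gain over the $w_2^{*}$-exponent is $\varepsilon:=\lim\bigl(-\log|\alpha_j-\alpha_j'|/\log H(\alpha_j)\bigr)$, which by Lemma \ref{lem:Galois} satisfies $\varepsilon\le 1$. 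For the first family $\alpha_n$ one indeed has $\varepsilon=1$, but for the second family $\beta_n$ the conjugate separation is anomalously large, $|\beta_n-\beta_n'|\asymp q^{-2(k+1)(r^n-1)/(r-1)+2r^{n-1}}$, giving $\varepsilon=1-\frac{r(r-1)}{2r(k+1)+(r-1)(r-2)}<1$. This is precisely why the second entries of \eqref{eq:w2*} and \eqref{eq:w2} differ by less than $1$ (they carry $(r-4)$ versus $(r-3)$); your ``$+1$'' shortcut would yield $r-\frac{r(r-1)(r-4)}{2r(k+1)+(r-1)(r-2)}$ in place of \eqref{eq:mainequal22}, contradicting the theorem. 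Relatedly, computing $\varepsilon$ and $H(\beta_n)$ correctly requires observing that the period of $\beta_n$ can be shifted back by $r^{n-1}$ partial quotients (the rewritten form of $\beta_n$ in the paper's proof, to which Lemmas \ref{lem:conj2} and \ref{lem:quadheight} are then applied); your sketch of ``truncating in the middle of a run'' does not capture this preperiod reduction, which is where the corrections $\pm 2r^{n-1}$, and hence the exact constants in the theorem, come from.
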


\begin{rem}
	It is well-known and easy to see that for any $\xi \in \lF_q ((T ^ {- 1}))$,
	\begin{equation}\label{eq:w1_determine}
		w_1 (\xi) = w_1 ^ {*} (\xi) = \limsup_{n \longrightarrow \infty } \frac{\deg q_{n + 1}}{\deg q_n},
	\end{equation}
	where $(p_n / q_n)_{n \geq 0}$ is the convergent sequence of $\xi$.
	Since $\Theta_k ^ {t}(\lambda)$ has a bounded partial quotient, we have $w_1 (\Theta_k ^ {t}(\lambda)) = w_1 ^ {*} (\Theta_k ^ {t} (\lambda)) = 1$.
	If
	\begin{equation*}
		r > \frac{3d + 2 + \sqrt{9d ^ 2 + 4d + 4}}{2},
	\end{equation*}
	there exists an effectively computable positive constant $C_1 (r, d)$, depending only on $r$ and $d$, such that we have \eqref{eq:mainequal2} and \eqref{eq:mainequal22} for all $k \geq C_1 (r, d)$.
\end{rem}

The key point of the proof of Theorem \ref{thm:main1} is to find two sequences of very good quadratic approximation, that is, ultimately periodic continued fractions.

As an application of Theorem \ref{thm:main1}, we give a sufficient condition to determine the exact degree of $\Theta_k ^ {t} (\lambda)$.

\begin{cor}\label{cor:main1}
	Let $\Theta_k ^ {t} (\lambda) \in \lF_q ((T ^ {- 1}))$ be as in \eqref{eq:firstfamily}.
	Assume that $\lambda \neq 1$.
	If
	\begin{equation}\label{eq:main1_cor_cond}
		k = 0 \ \text{or}\ k > \frac{(r ^ 2 - 4r + 2)(r - 1)}{2r} - 1,
	\end{equation}
	then we have $\deg \Theta_k ^ {t} (\lambda) = r + 1$.
\end{cor}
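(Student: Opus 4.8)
The plan is to deduce the corollary directly from Theorem \ref{thm:main1} together with the inequality $w_2(\alpha) \leq \deg\alpha - 1$ for hyperquadratic $\alpha$ as in \eqref{eq:hyperquad} (quoted from Lemma \ref{lem:alg} in the introduction). Since $\Theta_k^t(\lambda)$ satisfies \eqref{eq:firstfamilyequation}, which is an equation of the shape \eqref{eq:hyperquad} of degree at most $r+1$, we know $\deg \Theta_k^t(\lambda) \leq r+1$; so it suffices to produce a quadratic approximation good enough to force $w_2(\Theta_k^t(\lambda)) > r-1$, which by Lemma \ref{lem:alg} rules out $\deg \Theta_k^t(\lambda) \leq r$ and hence gives $\deg \Theta_k^t(\lambda) = r+1$.

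First I would treat the case $k = 0$. Here the bound \eqref{eq:w2} of Theorem \ref{thm:main1} gives
\begin{equation*}
	w_2(\Theta_0^t(\lambda)) \geq \frac{r-1}{1} + 1 = r,
\end{equation*}
which is strictly greater than $r-1$; combined with $w_2 \leq \deg\Theta_0^t(\lambda) - 1$ this yields $\deg\Theta_0^t(\lambda) \geq r+1$, hence equality.

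For the second case I would use the other lower bound in \eqref{eq:w2}, namely
\begin{equation*}
	w_2(\Theta_k^t(\lambda)) \geq r - \frac{r(r-1)(r-3)}{2r(k+1) + (r-1)(r-2)},
\end{equation*}
and ask when this quantity exceeds $r-1$. Rearranging, $r - \frac{r(r-1)(r-3)}{2r(k+1)+(r-1)(r-2)} > r-1$ is equivalent to $\frac{r(r-1)(r-3)}{2r(k+1)+(r-1)(r-2)} < 1$, i.e. $r(r-1)(r-3) < 2r(k+1) + (r-1)(r-2)$, i.e. $2r(k+1) > r(r-1)(r-3) - (r-1)(r-2) = (r-1)\big(r(r-3) - (r-2)\big) = (r-1)(r^2 - 4r + 2)$, which is precisely $k > \frac{(r^2-4r+2)(r-1)}{2r} - 1$, the condition \eqref{eq:main1_cor_cond}. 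So under that hypothesis $w_2(\Theta_k^t(\lambda)) > r-1$, and again Lemma \ref{lem:alg} forces $\deg\Theta_k^t(\lambda) = r+1$.

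The whole argument is essentially bookkeeping once Theorem \ref{thm:main1} is in hand; there is no real obstacle, only the elementary algebraic rearrangement above linking the lower bound on $w_2$ to the stated threshold on $k$. I would just note that if $r \leq 3$ the fraction $\frac{(r^2-4r+2)(r-1)}{2r} - 1$ may be negative, in which case the condition on $k$ is automatic (or subsumed by $k=0$), so the statement holds trivially in those small cases as well; otherwise the two displayed cases cover all $k$ satisfying \eqref{eq:main1_cor_cond}.
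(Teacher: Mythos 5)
Your proposal is correct and follows exactly the route the paper takes: bound $\deg\Theta_k^t(\lambda)\leq r+1$ from the hyperquadratic equation, use the two lower bounds in \eqref{eq:w2} of Theorem \ref{thm:main1} to get $w_2(\Theta_k^t(\lambda))>r-1$ under \eqref{eq:main1_cor_cond}, and conclude via Lemma \ref{lem:alg}. Your explicit rearrangement showing that the threshold on $k$ is precisely the condition for the second lower bound to exceed $r-1$ is the computation the paper leaves implicit, and it checks out.
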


We recall another family of hyperquadratic continued fractions.
We define a sequence $(F_n)_{n \geq 0}$ of polynomials in $\lF_q [T]$ by
\begin{equation}\label{eq:fib}
	F_0 = 1, \quad F_1 = T, \quad F_{n + 1} = TF_n + F_{n - 1}\quad \text{for}\ n \geq 1.
\end{equation}
Note that the sequence can be regarded as the analogue of the Fibonacci sequence.
Let $s, t \geq 1$ be integers and we put $q := 2 ^ s, r := 2 ^ t$.
Let $\ell \geq 1$ be an integer, ${\bm \lambda} = (\lambda_1, \ldots , \lambda_{\ell}) \in (\lF_q ^ {*}) ^ {\ell}, {\bm \varepsilon} = (\varepsilon_1, \varepsilon_2) \in (\lF_q ^ {*}) ^ 2$, and $(p_n / q_n)_{0 \leq n \leq \ell - 1}$ be the convergent sequence of the continued fraction $[\lambda_1 T, \ldots , \lambda_{\ell}T]$.
We consider the algebraic equation
\begin{equation}\label{eq:char2eq}
	q_{\ell - 1} X ^ {r + 1} + p_{\ell - 1} X ^ r + (\varepsilon_1 q_{\ell - 2} F_{r-1} + \varepsilon_2 q_{\ell - 1} F_{r - 2}) X + \varepsilon_1 p_{\ell - 2} F_{r - 1} + \varepsilon_2 p_{\ell  - 1} F_{r - 2} = 0.
\end{equation}
Lasjaunias \cite{Lasjaunias17} proved that \eqref{eq:char2eq} has a unique root $\Phi_{\ell} ^ t ({\bm \lambda}; {\bm \varepsilon})$ in $\lF_q ((T ^ {- 1}))$ with $\abs{\Phi_{\ell} ^ t ({\bm \lambda}; {\bm \varepsilon})} \geq q$, $\Phi_{\ell} ^ t ({\bm \lambda}; {\bm \varepsilon})$ is hyperquadratic and its continued fraction expansion is the following:
\begin{equation}\label{eq:secondfamily}
	\Phi_{\ell} ^ t ({\bm \lambda}; {\bm \varepsilon}) = [\lambda_1 T, \lambda_2 T, \ldots ],
\end{equation}
where a sequence $(\lambda_n)_{n \geq 1}$ satisfy
\begin{gather*}
	\lambda_{\ell + r m + 1} = (\varepsilon_2 / \varepsilon_1) \varepsilon_2 ^ {(- 1) ^ {m + 1}} \lambda_{m + 1} ^ r,\\
	\lambda_{\ell + r m + i} = (\varepsilon_1 / \varepsilon_2 ) ^ {(- 1) ^ i} \quad \text{for}\ m \geq 0 \ \text{and}\ 2 \leq i \leq r.
\end{gather*}
Note that if $\varepsilon_1 = \varepsilon_2 = 1$ and $\lambda_j = 1$ for all $1\leq j \leq \ell$, then $\Phi_{\ell} ^ t ({\bm \lambda}; {\bm \varepsilon})$ is ultimately periodic continued fraction, that is, $\Phi_{\ell} ^ t ({\bm \lambda}; {\bm \varepsilon})$ is quadratic.

In this paper, we consider the continued fractions $\Phi_{\ell } ^ t ({\bm \lambda}; {\bm \varepsilon})$ in particular cases.

\begin{thm}\label{thm:main2}
	Let $\Phi_{\ell} ^ t ({\bm \lambda}; {\bm \varepsilon})$ be as in \eqref{eq:secondfamily}, $m \geq 1, d \geq 2$ be integers with $m \leq \ell$, and $\lambda$ be in $\lF_q ^ {*}$ with $\lambda \neq 1$.
	\begin{enumerate}
		\item[(1)] Assume that $\lambda _1 = \lambda , \lambda _i = 1$ for all $2 \leq i \leq m, \varepsilon_1 = \varepsilon_2 = 1$ and if $m < \ell $, then $\lambda_{m + 1} \neq 1$.
		Then we have
		\begin{equation}\label{eq:main22}
			w_2 ^ {*} (\Phi_{\ell} ^ t ({\bm \lambda}; {\bm \varepsilon}))\geq \frac{m}{\ell}(r - 1),\quad 
			w_2 (\Phi_{\ell} ^ t ({\bm \lambda }; {\bm \varepsilon})) \geq 1 + \frac{m}{\ell} (r - 1).
		\end{equation}
		If
		\begin{equation}\label{eq:maincond3}
			\frac{\ell}{m} \leq \frac{2(r - 1)}{d + \sqrt{d ^ 2 + 4(2r - 1)d + 4}},
		\end{equation}
		then we have
		\begin{equation}\label{eq:mainequal3}
			w_n ^ {*} (\Phi_{\ell} ^ t ({\bm \lambda}; {\bm \varepsilon})) = \frac{m}{\ell}(r - 1), \quad 
			w_n (\Phi_{\ell} ^ t ({\bm \lambda}; {\bm \varepsilon})) = 1 + \frac{m}{\ell}(r - 1)
		\end{equation}
		for all integers $2 \leq n \leq d$.
		
		\item[(2)] Assume that $r$ is a power of $q, \varepsilon_1 = \varepsilon_2 = 1$ and $\lambda_i = \lambda$ for all $1 \leq i \leq \ell$.
		Then we have
		\begin{gather}\label{eq:mainequal6}
			w_2 ^ {*} (\Phi_{\ell} ^ t ({\bm \lambda}; {\bm \varepsilon})) \geq r - 1 - \frac{r(r - 1)(r - 4)}{2\ell r + (r - 1)(r - 2)},\\
			\label{eq:mainequal66}
			w_2 (\Phi_{\ell} ^ t ({\bm \lambda}; {\bm \varepsilon})) \geq r - \frac{r(r - 1)(r - 3)}{2\ell r + (r - 1)(r - 2)}.
		\end{gather}
		If
		\begin{equation}\label{eq:maincond7}
			2 r d \leq \left( r - 2 - \frac{r(r - 1)(r - 4)}{2 \ell r + (r - 1)(r - 2)}\right) \left( r - d - \frac{r(r - 1)(r - 4)}{2 \ell r + (r - 1)(r - 2)}\right) ,
		\end{equation}
		then we have
		\begin{gather}\label{eq:mainequal7}
			w_n ^ {*} (\Phi_{\ell} ^ t ({\bm \lambda}; {\bm \varepsilon})) = r - 1 - \frac{r(r - 1)(r - 4)}{2 \ell r + (r - 1)(r - 2)},\\
			\label{eq:mainequal77}
			w_n (\Phi_{\ell} ^ t ({\bm \lambda}; {\bm \varepsilon})) = r - \frac{r (r - 1)(r - 3)}{2 \ell r + (r - 1)(r - 2)}
		\end{gather}
		for all integers $2 \leq n \leq d$.
	\end{enumerate}
\end{thm}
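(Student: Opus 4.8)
The plan is to follow the scheme of the proof of Theorem \ref{thm:main1}. In each of the two cases I would (a) describe the continued fraction expansion \eqref{eq:secondfamily} combinatorially, (b) use this to exhibit two sequences of ultimately periodic continued fractions approximating $\Phi_{\ell}^{t}({\bm\lambda};{\bm\varepsilon})$ exceptionally well, deducing the lower bounds \eqref{eq:main22}, \eqref{eq:mainequal6}, \eqref{eq:mainequal66}, and (c) use the hyperquadratic equation \eqref{eq:char2eq} together with the lemmas of Section \ref{sec:lemma} to rule out better approximations of degree at most $d$, obtaining the matching upper bounds. For step (a): setting $\varepsilon_{1}=\varepsilon_{2}=1$ in the recurrence for $(\lambda_{n})_{n\ge1}$ gives $\lambda_{\ell+rm+1}=\lambda_{m+1}^{r}$ and $\lambda_{\ell+rm+i}=1$ for $2\le i\le r$, so that after the initial word $[\lambda_{1}T,\dots,\lambda_{\ell}T]$ the partial quotients of $\Phi_{\ell}^{t}$ fall into blocks of length $r$ of the shape $[\lambda_{m+1}^{r}T,T,\dots,T]$. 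In case $(1)$ the hypotheses $\lambda_{i}=1$ for $2\le i\le m$ and $\lambda_{m+1}\ne1$ (when $m<\ell$) confine the partial quotients different from $T$, and their density is governed by $m/\ell$; in case $(2)$, since $r$ is a power of $q$ one has $\lambda^{r}=\lambda$, every block equals $[\lambda_{m+1}T,T^{[r-1]}]$, and a direct check shows the expansion is self-similar with the same kind of combinatorics as \eqref{eq:firstfamily} for $\Theta_{k}^{t}(\lambda)$. This is precisely why the conclusions of the present theorem are obtained from those of Theorem \ref{thm:main1} under the substitution $k+1\mapsto\ell$ in case $(2)$ and $k+1\mapsto\ell/m$ in case $(1)$.

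For the lower bounds, exactly as in Theorem \ref{thm:main1}, for each large index one cuts the expansion at a breakpoint of the block hierarchy, periodizes the resulting initial segment to produce a quadratic element $\beta$, and checks (using that all partial quotients have degree $1$) that $\beta$ still agrees with $\Phi_{\ell}^{t}$ over a long further run. The standard error and height estimates for (eventually) periodic continued fractions then give the lower bound for $w_{2}^{*}(\Phi_{\ell}^{t})$, the ``first type'' of periodization producing $\frac{m}{\ell}(r-1)$ in case $(1)$ and the ``second type'' producing the value in \eqref{eq:mainequal6} in case $(2)$. The companion computation with the associated quadratic polynomials, carried out as in the proof of Theorem \ref{thm:main1}, yields the lower bound for $w_{2}(\Phi_{\ell}^{t})$, which here exceeds that for $w_{2}^{*}$ by exactly $1$, establishing \eqref{eq:main22}, \eqref{eq:mainequal66}.

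For the upper bounds, fix an integer $n$ with $2\le n\le d$ and suppose for contradiction that some $\alpha$ with $\deg\alpha\le n$ (resp.\ some polynomial $P$ with $\deg P\le n$) realises an exponent strictly larger than the claimed one. One first records $\deg\Phi_{\ell}^{t}=r+1$, a consequence of the lower bounds just proved and of Lemma \ref{lem:alg}, which gives $w_{2}(\Phi_{\ell}^{t})\le\deg\Phi_{\ell}^{t}-1$. Then, arguing as in the proof of Theorem \ref{thm:main1}, one uses \eqref{eq:char2eq} --- which expresses $(\Phi_{\ell}^{t})^{r}$ as a linear fractional transform of $\Phi_{\ell}^{t}$ over $\lF_{q}(T)$ --- together with the Section \ref{sec:lemma} estimates on the smallness of $\abs{P(\Phi_{\ell}^{t})}$ for $\deg P\le n$, to force any sufficiently good $\alpha$ (resp.\ $P$) to coincide, up to bounded factors, with one of the quadratic approximations built above or with a convergent of $\Phi_{\ell}^{t}$. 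The numerical hypotheses \eqref{eq:maincond3} in case $(1)$ and \eqref{eq:maincond7} in case $(2)$ are exactly the solvability ranges of the resulting quadratic ``gap'' inequalities, so they guarantee that none of these candidates beats the asserted exponent, which is the desired contradiction and gives \eqref{eq:mainequal3}, \eqref{eq:mainequal7}, \eqref{eq:mainequal77}.

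The main difficulty is the upper bound: converting \eqref{eq:char2eq} into an effective lower bound for $\abs{P(\Phi_{\ell}^{t})}$ valid uniformly over all $P$ of degree at most $n$, and checking that the inequalities which then emerge are precisely \eqref{eq:maincond3} and \eqref{eq:maincond7}. Case $(2)$ is the harder one, since the self-similar combinatorics force the argument to be iterated along the block hierarchy, just as in the corresponding part of Theorem \ref{thm:main1}; case $(1)$ is comparatively straightforward, the partial quotients different from $T$ being confined to the first block, so that the analysis reduces essentially to that of a single periodization.
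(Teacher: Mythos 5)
Your overall scheme for the lower bounds matches the paper's: pass to $\Phi:=\Phi_{\ell}^{t}({\bm\lambda};{\bm\varepsilon})^{-1}=[0,\lambda_1T,\lambda_2T,\ldots]$, locate the partial quotients different from $T$, periodize at the breakpoints of the block hierarchy to get quadratic approximants, and estimate $\abs{\Phi-\alpha_n}$, $\abs{\alpha_n-\alpha_n'}$ and $H(\alpha_n)$ via Lemmas \ref{lem:contidif}, \ref{lem:conti.lem}, \ref{lem:conj2} and \ref{lem:quadheight}; Lemma \ref{lem:bestquad2} then yields \eqref{eq:main22}, \eqref{eq:mainequal6} and \eqref{eq:mainequal66}. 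Two factual slips in that part: in case (2) the lower bound for $w_2$ does \emph{not} exceed that for $w_2^{*}$ by exactly $1$ --- the periodization $\beta_n$ admits a shorter preperiod, so $-\log\abs{\beta_n-\beta_n'}/\log H(\beta_n)\to 1-\frac{r(r-1)}{2\ell r+(r-1)(r-2)}<1$, which is exactly why \eqref{eq:mainequal66} and \eqref{eq:mainequal6} differ by less than $1$; and in case (1) the partial quotients $\neq T$ are not confined to the first block, since $\lambda_{1+\ell\sum_{h=0}^{j-1}r^h}=\lambda^{r^j}\neq 1$ for every $j\geq 0$.

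The genuine gap is your step (c). The paper does not use the hyperquadratic equation \eqref{eq:char2eq} to obtain the upper bounds, and neither does the proof of Theorem \ref{thm:main1} that you are modelling this on. The matching upper bounds come entirely from Lemma \ref{lem:bestquad}: once a single sequence of quadratic approximants is exhibited with $\log H(\alpha_{n+1})/\log H(\alpha_n)\to\theta=r$, $-\log\abs{\Phi-\alpha_n}/\log H(\alpha_n)\to d+\delta$ and $-\log\abs{\alpha_n-\alpha_n'}/\log H(\alpha_n)\to\varepsilon$, the numerical condition $2d\theta\leq(d-2+\delta)\delta$ --- which is precisely \eqref{eq:maincond3}, resp.\ \eqref{eq:maincond7}, after substituting the computed $\delta$ --- already forces $w_n^{*}=d-1+\delta$ and $w_n=d-1+\delta+\varepsilon$ for all $2\leq n\leq d$. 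Your alternative plan, converting \eqref{eq:char2eq} into an effective lower bound for $\abs{P(\Phi)}$ uniform over all $P$ of degree at most $n$ and showing every good approximant essentially coincides with a convergent or a periodization, is exactly the step you flag as the ``main difficulty'' and leave unexecuted; it is substantially harder than what is needed and is not how either theorem is proved. Without either invoking Lemma \ref{lem:bestquad} or actually carrying out that Liouville-type analysis, the equalities \eqref{eq:mainequal3}, \eqref{eq:mainequal7} and \eqref{eq:mainequal77} are not established.
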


\begin{rem}
	By \eqref{eq:w1_determine}, we have $w_1 (\Phi_{\ell} ^ t ({\bm \lambda}; {\bm \varepsilon})) = w_1 ^ {*} (\Phi_{\ell} ^ t ({\bm \lambda}; {\bm \varepsilon})) = 1$.
	If
	\begin{equation*}
		r > \frac{3d + 2 + \sqrt{9d ^ 2 + 4d + 4}}{2},
	\end{equation*}
	there exists an effectively computable positive constant $C_2 (r, d)$, depending only on $r$ and $d$, such that we have \eqref{eq:mainequal7} and \eqref{eq:mainequal77} for all $\ell \geq C_2 (r, d)$.
\end{rem}

\begin{cor}\label{cor:main2}
	Let $\Phi_{\ell} ^ t ({\bm \lambda}; {\bm \varepsilon})$ be as in \eqref{eq:secondfamily}, $m \geq 1, d \geq 2$ be integers with $m \leq \ell$, and $\lambda$ be in $\lF_q ^ {*}$ with $\lambda \neq 1$.
	\begin{enumerate}
		\item[(1)] Assume that $\lambda _1 = \lambda , \lambda _i = 1$ for all $2 \leq i \leq m, \varepsilon_1 = \varepsilon_2 = 1$ and if $m < \ell $, then $\lambda_{m + 1} \neq 1$.
		If
		\begin{equation*}
			\frac{m}{\ell} > \frac{r - 2}{r - 1},
		\end{equation*}
		then we have $\deg \Phi_{\ell} ^ t ({\bm \lambda}; {\bm \varepsilon}) = r + 1$.
		
		\item[(2)] Assume that $r$ is a power of $q, \varepsilon_1 = \varepsilon_2 = 1$ and $\lambda_i = \lambda$ for all $1 \leq i \leq \ell$.
		If
		\begin{equation*}
			\ell > \frac{(r ^ 2 - 4r + 2)(r - 1)}{2r},
		\end{equation*}
		then we have $\deg \Phi_{\ell} ^ t({\bm \lambda}; {\bm \varepsilon}) = r + 1$.
	\end{enumerate}
\end{cor}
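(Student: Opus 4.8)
The plan is to read both parts off the lower bounds for $w_2$ in Theorem~\ref{thm:main2}, combined with the hyperquadratic bound $w_2(\alpha)\le\deg\alpha-1$ of Lemma~\ref{lem:alg}. Recall that $\Phi_{\ell}^t({\bm \lambda};{\bm \varepsilon})$ is hyperquadratic and is a root of the degree-$(r+1)$ equation \eqref{eq:char2eq}, whose shape is that of \eqref{eq:hyperquad}; hence $\deg\Phi_{\ell}^t({\bm \lambda};{\bm \varepsilon})\le r+1$ automatically. Consequently it suffices, in each case, to establish the strict inequality $w_2(\Phi_{\ell}^t({\bm \lambda};{\bm \varepsilon}))>r-1$: then Lemma~\ref{lem:alg} gives $\deg\Phi_{\ell}^t({\bm \lambda};{\bm \varepsilon})-1\ge w_2(\Phi_{\ell}^t({\bm \lambda};{\bm \varepsilon}))>r-1$, forcing $\deg\Phi_{\ell}^t({\bm \lambda};{\bm \varepsilon})\ge r+1$ and therefore equality.

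For part (1), the standing hypotheses are precisely those of Theorem~\ref{thm:main2}(1), so the second inequality in \eqref{eq:main22} applies and yields $w_2(\Phi_{\ell}^t({\bm \lambda};{\bm \varepsilon}))\ge 1+\tfrac{m}{\ell}(r-1)$. Since $r=2^t\ge 2$, the inequality $1+\tfrac{m}{\ell}(r-1)>r-1$ is, after subtracting $1$ and dividing by $r-1>0$, equivalent to $\tfrac{m}{\ell}>\tfrac{r-2}{r-1}$, which is exactly the hypothesis. This settles part (1).

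For part (2), the hypotheses match those of Theorem~\ref{thm:main2}(2), so \eqref{eq:mainequal66} gives $w_2(\Phi_{\ell}^t({\bm \lambda};{\bm \varepsilon}))\ge r-\tfrac{r(r-1)(r-3)}{2\ell r+(r-1)(r-2)}$. The denominator $2\ell r+(r-1)(r-2)$ is positive (as $\ell\ge 1$ and $r\ge 2$), so the bound $r-\tfrac{r(r-1)(r-3)}{2\ell r+(r-1)(r-2)}>r-1$ is equivalent to $r(r-1)(r-3)<2\ell r+(r-1)(r-2)$; moving the term $(r-1)(r-2)$ across and factoring out $r-1$ turns this into $(r-1)(r^2-4r+2)<2\ell r$, i.e.\ $\ell>\tfrac{(r^2-4r+2)(r-1)}{2r}$, which is again the standing hypothesis. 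This settles part (2).

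Since the whole argument is a short deduction from Theorem~\ref{thm:main2} and Lemma~\ref{lem:alg}, I do not anticipate any genuine obstacle; the only points requiring a moment's care are the sign of the denominator in part (2) and the degenerate regime $r-3\le 0$ (i.e.\ $r=2$), where the displayed lower bound for $w_2$ is trivially at least $r$ and the threshold on $\ell$ becomes nonpositive, so that it imposes no constraint — all of this being entirely elementary.
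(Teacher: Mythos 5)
Your proposal is correct and is essentially the paper's own argument: the authors prove Corollary \ref{cor:main2} ``in the same way'' as Corollary \ref{cor:main1}, namely by noting $\deg \Phi_{\ell}^t({\bm \lambda};{\bm \varepsilon}) \le r+1$ from the hyperquadratic equation \eqref{eq:char2eq}, deducing $w_2 > r-1$ from the lower bounds of Theorem \ref{thm:main2} under the stated hypotheses, and concluding via Lemma \ref{lem:alg}. Your algebraic verification that each hypothesis is exactly equivalent to the corresponding strict inequality $w_2 > r-1$ is accurate.
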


In the last part of this section, we mention a problem associated to Problem \ref{prob:main1}, Corollary \ref{cor:main1} and \ref{cor:main2}.

\begin{prob}
	Let $n\geq 2$ be an integer.
	Determine the set of all values taken by $w_n - w_n ^ {*}$ over the set of algebraic Laurent series.
\end{prob}


\section{Preliminaries}\label{sec:lemma}

In this section, we gather lemmas for the proof of main results.

\begin{lem}\label{lem:alg}{\rm (\cite[Theorem 5.2]{Ooto17}).}
	Let $n \geq 1$ be an integer and $\alpha \in \lF_q ((T ^ {- 1}))$ be an algebraic Laurent series.
	Then we have 
	\begin{equation*}
		w_n ^ {*} (\alpha), w_n (\alpha) \leq \deg \alpha - 1.
	\end{equation*}
\end{lem}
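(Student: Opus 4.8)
The plan is to prove a Liouville-type lower bound for $\abs{P(\alpha)}$ when $P$ has degree at most $n$ and $P(\alpha) \neq 0$, and then to deduce the bound on $w_n^{*}$ from the one on $w_n$ through the standard observation that the minimal polynomial of a good algebraic approximation of $\alpha$ is small at $\alpha$.

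Write $d = \deg \alpha$, let $Q \in (\lF_q[T])[X]$ be the minimal polynomial of $\alpha$, and put $p^e = \insep \alpha$, so that over $\overline{\lF_q(T)}$ the polynomial $Q$ has $d/p^e$ distinct roots, each of multiplicity $p^e$, one of them being $\alpha$. Since the leading coefficient of $Q$ is monic in $T$ we have $\abs{\mathrm{lc}(Q)} \geq 1$, and the standard non-Archimedean bound applied to $Q$ gives $\max(1, \abs{\beta}) \leq H(Q)$ for every root $\beta$ of $Q$. Now let $P \in (\lF_q[T])[X]$ have $\deg P \leq n$ and $P(\alpha) \neq 0$. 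Since $Q$ is irreducible and, because $Q(\alpha) = 0 \neq P(\alpha)$, does not divide $P$, we get $\gcd(P, Q) = 1$, hence $\Res(P, Q)$ is a nonzero element of $\lF_q[T]$ and $\abs{\Res(P, Q)} \geq 1$. I would insert this into the factorization $\Res(P, Q) = \pm\, \mathrm{lc}(Q)^{\deg P} \prod_{Q(\beta) = 0} P(\beta)$, where the product runs over the roots of $Q$ with multiplicity, and use $\abs{P(\beta)} \leq H(P) \max(1, \abs{\beta})^n \leq H(P) H(Q)^n$ for the $d/p^e - 1$ roots $\beta \neq \alpha$; this gives
\begin{equation*}
	1 \leq \abs{\mathrm{lc}(Q)}^n \, \abs{P(\alpha)}^{p^e} \, \bigl( H(P) H(Q)^n \bigr)^{d - p^e},
\end{equation*}
and hence $\abs{P(\alpha)} \gg_{\alpha, n} H(P)^{- (d - p^e)/p^e} \geq H(P)^{-(d - 1)}$, the last inequality because $(d - p^e)/p^e \leq d - 1$ and $H(P) \geq 1$. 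As only finitely many polynomials have bounded height, this yields $w_n(\alpha) \leq d - 1$.

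For $w_n^{*}$ I would show $w_n^{*}(\alpha) \leq w_n(\alpha)$, which combined with the previous step finishes the proof. We may assume $w_n^{*}(\alpha) > d - 1 \geq 1$. Let $\beta \in \overline{\lF_q(T)}$ have degree at most $n$, $\beta \neq \alpha$, and satisfy $0 < \abs{\alpha - \beta} \leq H(\beta)^{-w^{*} - 1}$ for some admissible $w^{*} > 0$. Discarding the finitely many conjugates of $\alpha$, the minimal polynomial $P$ of $\beta$ has degree at most $n$, satisfies $P(\alpha) \neq 0$ and $H(P) = H(\beta)$, and since $\abs{\alpha - \beta} < 1$ we have $\max(1, \abs{\alpha}, \abs{\beta}) = \max(1, \abs{\alpha}) =: M$. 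Then, writing $P(X) = \sum_j c_j X^j$ and using $P(\alpha) = P(\alpha) - P(\beta) = (\alpha - \beta) \sum_j c_j (\alpha^{j-1} + \cdots + \beta^{j-1})$ together with the ultrametric inequality,
\begin{equation*}
	0 < \abs{P(\alpha)} \leq M^{n-1} H(P) \, \abs{\alpha - \beta} \leq M^{n-1} H(P)^{-w^{*}}.
\end{equation*}
So for every $\varepsilon > 0$ there are infinitely many $P$ of degree at most $n$ with $0 < \abs{P(\alpha)} \leq H(P)^{-(w^{*} - \varepsilon)}$, whence $w_n(\alpha) \geq w^{*} - \varepsilon$; letting $\varepsilon \to 0$ and taking the supremum over admissible $w^{*}$ gives $w_n^{*}(\alpha) \leq w_n(\alpha) \leq d - 1$.

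The only step needing real care is the bookkeeping of root multiplicities in the resultant identity when $\insep \alpha > 1$: it is exactly this that worsens the naive exponent $d - 1$ to $(d - p^e)/p^e$, and one must check that this loss is harmless. The remaining ingredients — the bound $\abs{\beta} \leq H(Q)$ for the roots of $Q$, the finiteness of polynomials of bounded height, and the identity $H(P) = H(\beta)$ for a minimal polynomial — are routine.
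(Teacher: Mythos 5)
Your proof is correct. Note that the paper does not prove this lemma at all --- it is quoted verbatim from \cite[Theorem 5.2]{Ooto17} --- so there is no internal proof to compare against; your argument is the standard one and is sound. The first half is the Liouville-type bound via the resultant of $P$ with the minimal polynomial $Q$ of $\alpha$, and your bookkeeping of the inseparable multiplicity $p^e$ is right: each of the $d/p^e$ distinct roots occurs $p^e$ times in $\Res(P,Q)=\pm\,\mathrm{lc}(Q)^{\deg P}\prod_\beta P(\beta)$, the exponent $(d-p^e)/p^e$ is at most $d-1$, and $H(P)\geq 1$ makes the weakening harmless; together with the finiteness of polynomials of bounded height this gives $w_n(\alpha)\leq d-1$. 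The second half, $w_n^{*}(\alpha)\leq w_n(\alpha)$, is itself already stated in the paper as Lemma \ref{lem:wnwn*} (again cited from \cite{Ooto17}), so strictly speaking you are reproving a quoted ingredient; your telescoping estimate $\abs{P(\alpha)}\leq M^{n-1}H(P)\abs{\alpha-\beta}$ after discarding the conjugates of $\alpha$ and the finitely many $\beta$ with $H(\beta)=1$ is the standard and correct way to do it. The only blemish is the phrase ``$w_n^{*}(\alpha)>d-1\geq 1$'', which presupposes $d\geq 2$; for rational $\alpha$ one should just say $w^{*}>0$, which is all that is needed to get $\abs{\alpha-\beta}<1$, and the rest goes through unchanged.
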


Lemma \ref{lem:contidif} and \ref{lem:conti.lem} are immediately seen.

\begin{lem}\label{lem:contidif}
	Let $\xi = [0, a_1, a_2, \ldots ], \zeta =[0, b_1, b_2, \ldots ]$ be in $\lF_q ((T ^ {- 1}))$.
	Assume that there exists an integer $k \geq 1$ such that $a_n = b_n$ for all $1 \leq n \leq k$ and $a_{k + 1} \neq  b_{k + 1}$.
	Then we have
	\begin{equation*}
	|\xi - \zeta| = \frac{\abs{a_{k + 1} - b_{k + 1}}}{\abs{a_{k + 1} b_{k + 1}}\abs{q_k} ^ 2},
	\end{equation*}
	where $(p_n / q_n)_{n \geq 0}$ is the convergent sequence of $\xi$.
\end{lem}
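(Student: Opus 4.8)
The plan is to compute $|\xi - \zeta|$ directly from the basic identities of continued fractions. First I would recall that for the Laurent series $\xi = [0, a_1, a_2, \ldots]$ with convergents $p_n/q_n$, the complete quotient $\xi_{k+1} := [a_{k+1}, a_{k+2}, \ldots]$ satisfies
\begin{equation*}
	\xi = \frac{\xi_{k+1} p_k + p_{k-1}}{\xi_{k+1} q_k + q_{k-1}},
\end{equation*}
and similarly $\zeta = (\zeta_{k+1} p_k + p_{k-1})/(\zeta_{k+1} q_k + q_{k-1})$, using that the first $k$ partial quotients agree so that $\xi$ and $\zeta$ share the same $p_k, p_{k-1}, q_k, q_{k-1}$. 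Here $\zeta_{k+1} = [b_{k+1}, b_{k+2}, \ldots]$.

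Next I would subtract these two expressions. Using the standard determinant identity $p_k q_{k-1} - p_{k-1} q_k = (-1)^{k+1}$, a short computation gives
\begin{equation*}
	\xi - \zeta = \frac{(\xi_{k+1} - \zeta_{k+1})(p_k q_{k-1} - p_{k-1} q_k)}{(\xi_{k+1} q_k + q_{k-1})(\zeta_{k+1} q_k + q_{k-1})} = \frac{\pm(\xi_{k+1} - \zeta_{k+1})}{(\xi_{k+1} q_k + q_{k-1})(\zeta_{k+1} q_k + q_{k-1})}.
\end{equation*}
Now I would pass to absolute values. Since $\deg_T a_{k+1} \geq 1$ and the $a_n$ (for $n \geq k+1$) all have degree $\geq 1$, we have $|\xi_{k+1}| = |a_{k+1}|$, and likewise $|\zeta_{k+1}| = |b_{k+1}|$; moreover $|a_{k+1}|, |b_{k+1}| > 1 \geq |q_{k-1}/q_k|$ (as $|q_{k-1}| < |q_k|$), so by the non-Archimedean property $|\xi_{k+1} q_k + q_{k-1}| = |\xi_{k+1}||q_k| = |a_{k+1}||q_k|$ and similarly for $\zeta$. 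For the numerator, $\xi_{k+1} - \zeta_{k+1} = (a_{k+1} - b_{k+1}) + (1/\xi_{k+2} - 1/\zeta_{k+2})$, and both correction terms have absolute value $< 1 \leq |a_{k+1} - b_{k+1}|$ when $a_{k+1} \neq b_{k+1}$ (using that $a_{k+1} - b_{k+1}$ is a nonzero polynomial, hence has absolute value $\geq 1$, while $|1/\xi_{k+2}|, |1/\zeta_{k+2}| \leq q^{-1} < 1$); the ultrametric inequality then yields $|\xi_{k+1} - \zeta_{k+1}| = |a_{k+1} - b_{k+1}|$. Combining, $|\xi - \zeta| = |a_{k+1} - b_{k+1}| / (|a_{k+1}||b_{k+1}||q_k|^2)$, which is the claim.

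I do not expect any serious obstacle here; this is a routine manipulation. The only points requiring a little care are the non-Archimedean estimates: one must justify that the "tail" contributions ($q_{k-1}$ against $\xi_{k+1} q_k$, and the $1/\xi_{k+2}$ terms against $a_{k+1} - b_{k+1}$) are strictly dominated, which follows from the degree conditions $\deg_T a_n \geq 1$ for $n \geq 1$ built into the definition of the continued fraction expansion and from $|q_n|$ being strictly increasing. One should also note the statement is symmetric in $\xi$ and $\zeta$ as it must be, since $|q_k|$ is the same for both.
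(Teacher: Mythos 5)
Your proof is correct, and it is the standard computation the paper has in mind: the authors state this lemma without proof (``immediately seen''), and your argument via the complete quotients, the determinant identity $p_kq_{k-1}-p_{k-1}q_k=\pm 1$, and the ultrametric estimates $|\xi_{k+1}q_k+q_{k-1}|=|a_{k+1}||q_k|$ and $|\xi_{k+1}-\zeta_{k+1}|=|a_{k+1}-b_{k+1}|$ is exactly the routine verification that justifies that claim. No gaps.
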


\begin{lem}\label{lem:conti.lem}
	Let $\xi = [a_0, a_1, \ldots ]$ be in $\lF_q ((T ^ {- 1}))$ and $(p_n / q_n)_{n \geq 0}$ be the convergent sequence of $\xi$.
	Then, for any  $n \geq 1$, we have
	\begin{equation}\label{enum:q}
		|q_n| = |a_1| |a_2| \cdots |a_n|.
	\end{equation}
\end{lem}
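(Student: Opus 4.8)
The plan is to prove the identity $|q_n| = |a_1||a_2|\cdots|a_n|$ by induction on $n$, using the recurrence $q_n = a_n q_{n-1} + q_{n-2}$ together with the non-Archimedean nature of the absolute value on $\lF_q((T^{-1}))$. First I record the base cases: since $q_0 = 1$ and $q_1 = a_1 q_0 + q_{-1} = a_1$, we have $|q_1| = |a_1|$, and the empty-product convention handles $n = 0$ if needed. For the inductive step, suppose $n \geq 2$ and that $|q_{n-1}| = |a_1|\cdots|a_{n-1}|$ and $|q_{n-2}| = |a_1|\cdots|a_{n-2}|$ both hold.

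The key observation is that $\deg_T a_m \geq 1$ for every $m \geq 1$, so $|a_n| \geq q > 1$. Applying the non-Archimedean inequality to $q_n = a_n q_{n-1} + q_{n-2}$, I compare the sizes of the two summands: $|a_n q_{n-1}| = |a_n||q_{n-1}| = |a_1|\cdots|a_n|$, while $|q_{n-2}| = |a_1|\cdots|a_{n-2}| = |a_n q_{n-1}| \cdot |a_{n-1}|^{-1}|a_n|^{-1} < |a_n q_{n-1}|$ because $|a_{n-1}||a_n| > 1$. Since the two terms have distinct absolute values, the ultrametric equality case gives $|q_n| = \max\{|a_n q_{n-1}|,\, |q_{n-2}|\} = |a_n||q_{n-1}| = |a_1||a_2|\cdots|a_n|$, completing the induction.

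There is no real obstacle here; the only point requiring care is the strict inequality $|q_{n-2}| < |a_n q_{n-1}|$, which must genuinely be strict so that the non-Archimedean absolute value of the sum equals the maximum rather than merely being bounded by it. This strictness is exactly where the hypothesis $\deg_T a_m \geq 1$ (equivalently $|a_m| \geq q \geq 2$) for $m \geq 1$ enters, and it is also implicitly what guarantees $q_n \neq 0$ so that the statement is meaningful. For $n = 1$ the argument degenerates to the direct computation $|q_1| = |a_1|$ noted above, so the base of the induction is secure.
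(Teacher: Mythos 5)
Your induction via the recurrence $q_n = a_n q_{n-1} + q_{n-2}$ and the ultrametric equality (using $|a_m| \geq q > 1$ to get the strict inequality $|q_{n-2}| < |a_n q_{n-1}|$) is correct and complete. The paper omits the proof as ``immediately seen,'' and this is exactly the standard argument it has in mind.
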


\begin{lem}\label{lem:conj2}{\rm (\cite[Lemma 4.6]{Ooto17}).}
	Let $r, s \geq 1$ be integers and $\alpha = [0, a_1,\ldots ,a_r, \overline{a_{r + 1}, \ldots , a_{r + s}}] \in \lF_q ((T ^ {- 1}))$ be an ultimately periodic continued fraction with $a_r \neq a_{r + s}$.
	Let $(p_n / q_n)_{n \geq 0}$ be the convergent sequence of $\alpha$.
	Then we have
	\begin{equation*}
		\frac{\min (|a_r|, |a_{r + s}| )}{|q_r| ^ 2} \leq |\alpha - \alpha '| \leq \frac{|a_r a_{r + s}|}{|q_r| ^ 2}.
	\end{equation*}
\end{lem}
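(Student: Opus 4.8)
The plan is to realise $\alpha$ as the image, under an $\lF_q(T)$-rational Möbius transformation, of the tail of its continued fraction, and to exploit that Galois conjugation commutes with such transformations. Put $\theta := [\overline{a_{r+1}, \ldots , a_{r+s}}]$ and $\widetilde\theta := a_r + 1/\theta = [a_r; \overline{a_{r+1}, \ldots , a_{r+s}}]$, so that by the standard identity relating convergents to the tail,
\[
\alpha = \frac{p_{r-1}\widetilde\theta + p_{r-2}}{q_{r-1}\widetilde\theta + q_{r-2}}.
\]
First I would check that $\alpha$ is separable over $\lF_q(T)$ (equivalently that $\widetilde\theta$, equivalently $\theta$, has two distinct conjugates); this falls out of the analysis of $\theta$ below, where one finds $|\theta| > 1 > |\theta'|$. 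Granting separability, applying the nontrivial $\lF_q(T)$-automorphism to the displayed identity gives the same formula with $\widetilde\theta$ replaced by its conjugate $\widetilde\theta'$. Subtracting the two identities and using $|p_{r-1}q_{r-2} - p_{r-2}q_{r-1}| = 1$, I obtain
\[
|\alpha - \alpha'| = \frac{|\widetilde\theta - \widetilde\theta'|}{\,|q_{r-1}\widetilde\theta + q_{r-2}|\cdot|q_{r-1}\widetilde\theta' + q_{r-2}|\,},
\]
and it remains to estimate the three quantities on the right.

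The heart of the matter is the purely periodic $\theta$. One has $|\theta| = |a_{r+1}| > 1$ at once. For its conjugate I would use the continued fraction matrix $\prod_{i=1}^{s}\begin{pmatrix} a_{r+i} & 1 \\ 1 & 0\end{pmatrix}$: its fixed-point equation yields the quadratic satisfied by $\theta$, and transposing it (that is, reversing the period) shows that $z \mapsto -1/z$ interchanges the roots of that quadratic with those of the quadratic satisfied by $\eta := [\overline{a_{r+s}, a_{r+s-1}, \ldots , a_{r+1}}]$. Since $|\eta| = |a_{r+s}| > 1$ while $|-1/\theta| = |a_{r+1}|^{-1} < 1$, the two roots of the first quadratic are distinct (so $\theta$, and hence $\alpha$, is separable) and $-1/\theta' = \eta$; thus $|\theta'| = |a_{r+s}|^{-1}$. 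This is the function-field analogue of Galois' theorem on reduced quadratic irrationals. It follows that $|\theta - \theta'| = |\theta| = |a_{r+1}|$ and $|\widetilde\theta - \widetilde\theta'| = |1/\theta - 1/\theta'| = |\theta - \theta'|/|\theta\theta'| = |a_{r+s}|$; that $|\widetilde\theta| = |a_r|$ (as $|1/\theta| < 1$); and, crucially, that $\widetilde\theta' = a_r + 1/\theta' = a_r - \eta = (a_r - a_{r+s}) - (\eta - a_{r+s})$ with $|\eta - a_{r+s}| < 1$, so that the hypothesis $a_r \neq a_{r+s}$ forces $|a_r - a_{r+s}| \geq 1$ and hence $|\widetilde\theta'| = |a_r - a_{r+s}|$.

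For the two denominators I would write $q_{r-1}\widetilde\theta + q_{r-2} = q_r + q_{r-1}/\theta$; since $|q_{r-1}/\theta| < |q_{r-1}| < |q_r|$ this gives $|q_{r-1}\widetilde\theta + q_{r-2}| = |q_r|$. For the other, $|q_{r-1}\widetilde\theta'| = |q_{r-1}|\,|a_r - a_{r+s}| \geq |q_{r-1}| > |q_{r-2}|$, so $|q_{r-1}\widetilde\theta' + q_{r-2}| = |q_{r-1}|\,|a_r - a_{r+s}|$. Here I invoke Lemma \ref{lem:conti.lem}, which also gives $|q_{r-1}| = |q_r|/|a_r|$. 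Substituting into the formula for $|\alpha - \alpha'|$ yields the exact value
\[
|\alpha - \alpha'| = \frac{|a_r|\,|a_{r+s}|}{|q_r|^2\,|a_r - a_{r+s}|},
\]
and the lemma follows from the elementary bounds $1 \leq |a_r - a_{r+s}| \leq \max(|a_r|, |a_{r+s}|)$ --- the lower one because $a_r - a_{r+s}$ is a nonzero polynomial, the upper one by the ultrametric inequality --- together with $|a_r a_{r+s}| / \max(|a_r|,|a_{r+s}|) = \min(|a_r|,|a_{r+s}|)$.

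The routine parts are the manipulations with the recurrences for $(p_n)$ and $(q_n)$ and with the non-Archimedean absolute value. The single step that needs genuine care, and the place where the hypothesis $a_r \neq a_{r+s}$ is indispensable, is the evaluation of $|\widetilde\theta'|$ and of the second denominator: both rest on pinning down the conjugate of the purely periodic $\theta$ (the identity $-1/\theta' = \eta$), and without $a_r \neq a_{r+s}$ the last period digit $a_{r+s}$ could cancel against $a_r$ inside $\widetilde\theta' = a_r + 1/\theta'$, making $|\widetilde\theta'|$ arbitrarily small and the stated bounds fail.
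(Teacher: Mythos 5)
Your proof is correct. The paper does not prove this lemma itself but imports it from \cite[Lemma 4.6]{Ooto17}; your argument is the standard one for that result — write $\alpha$ as a M\"obius image of the complete quotient $\widetilde\theta$, identify the conjugate of the purely periodic part via the Galois-type identity $-1/\theta'=\eta$ for the reversed period, and track absolute values — carried out correctly, including the one delicate point where $a_r\neq a_{r+s}$ is needed to evaluate $\left\lvert q_{r-1}\widetilde\theta'+q_{r-2}\right\rvert$.
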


The following lemma is well-known and easy to see.

\begin{lem}\label{lem:quadheightupper}
	Let $r \geq 0, s \geq 1$ be integers and $\alpha = [0, a_1, \ldots , a_r, \overline{a_{r + 1}, \ldots , a_{r + s}}] \in \lF_q ((T ^ {- 1}))$ be an ultimately periodic continued fraction.
	Let $(p_n / q_n)_{n \geq 0}$ be the convergent sequence of $\xi$.
	Then we have $H(\alpha) \leq |q_r q_{r + s}|$.
\end{lem}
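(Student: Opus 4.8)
The plan is to produce an explicit quadratic polynomial over $\lF_q[T]$ that annihilates $\alpha$ and whose height is visibly at most $|q_r q_{r+s}|$, and then to pass to the minimal polynomial of $\alpha$ without increasing the height. Throughout, $(p_n/q_n)_{n\geq -1}$ denotes the sequence attached to the expansion of $\alpha$, and $\alpha_n = [a_n, a_{n+1}, \ldots]$ the $n$-th complete quotient.

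First I would use the standard identities $\alpha = (\alpha_{r+1}p_r + p_{r-1})/(\alpha_{r+1}q_r + q_{r-1})$ and $\alpha = (\alpha_{r+s+1}p_{r+s} + p_{r+s-1})/(\alpha_{r+s+1}q_{r+s} + q_{r+s-1})$. Since the expansion is periodic from the $(r+1)$-st partial quotient on with period $s$, we have $\alpha_{r+1} = \alpha_{r+s+1}$. Solving each identity for this common complete quotient — legitimate because $\alpha q_r \neq p_r$ and $\alpha q_{r+s} \neq p_{r+s}$, as $\alpha$ is irrational — and equating the two expressions gives
\begin{equation*}
(\alpha q_{r-1} - p_{r-1})(\alpha q_{r+s} - p_{r+s}) = (\alpha q_{r+s-1} - p_{r+s-1})(\alpha q_r - p_r).
\end{equation*}
Expanding the difference of the two sides, this reads $P(\alpha) = 0$, where $P(X)\in\lF_q[T][X]$ has leading coefficient $q_{r-1}q_{r+s} - q_{r+s-1}q_r$, linear coefficient $-(q_{r-1}p_{r+s} + p_{r-1}q_{r+s} - q_{r+s-1}p_r - p_{r+s-1}q_r)$, and constant term $p_{r-1}p_{r+s} - p_{r+s-1}p_r$.

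Next I would check that $P$ is not the zero polynomial. For $r\geq 1$ the two quadratics on the two sides of the displayed equation have root multisets $\{p_{r-1}/q_{r-1},\, p_{r+s}/q_{r+s}\}$ and $\{p_{r+s-1}/q_{r+s-1},\, p_r/q_r\}$; since the convergents of an irrational continued fraction are pairwise distinct and $s\geq 1$, these multisets differ, so $P\not\equiv 0$. (The case $r=0$, where indices $-1$ occur, is settled by hand: there $q_{-1}=0$, $p_{-1}=1$, so $P(X) = -q_{s-1}X^2 - (q_s - p_{s-1})X + p_s$, which has nonzero leading coefficient.) As $\alpha$ is quadratic, being an ultimately periodic continued fraction, and $\deg_X P \leq 2$, we get $\deg_X P = 2$ and $P = cQ$ with $Q$ the minimal polynomial of $\alpha$ and $c\in\lF_q[T]\setminus\{0\}$; since $|c|\geq 1$, this gives $H(\alpha) = H(Q) \leq H(P)$.

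Finally I would bound $H(P)$ by the ultrametric inequality. Because $a_0 = 0$ we have $|\alpha| < 1$, hence $|p_n| < |q_n|$ for all $n\geq 0$ (with $p_0 = 0$); moreover $|q_{n-1}| \leq |q_n|$ and, by Lemma~\ref{lem:conti.lem}, $|q_n| = |a_1\cdots a_n|$. Thus both $|q_{r-1}q_{r+s}|$ and $|q_{r+s-1}q_r|$ are at most $|q_r q_{r+s}|$, so the leading coefficient of $P$ has absolute value $\leq |q_r q_{r+s}|$; each of the four terms making up the linear coefficient and the two terms making up the constant term contains at least one $p$-factor and is therefore strictly smaller in absolute value than $|q_r q_{r+s}|$. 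Hence $H(P) \leq |q_r q_{r+s}|$ and therefore $H(\alpha) \leq |q_r q_{r+s}|$, as claimed. I do not expect a genuine obstacle here: the lemma is essentially bookkeeping, the only points needing a little care being the verification that $P\neq 0$ and the handling of the index $-1$ when $r=0$.
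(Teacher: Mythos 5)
Your proof is correct; the paper gives no argument for this lemma at all (it is merely declared ``well-known and easy to see''), and what you write is exactly the standard argument one would supply: eliminate the common complete quotient $\alpha_{r+1}=\alpha_{r+s+1}$ to get an explicit annihilating quadratic, check it is nonzero, and bound its coefficients ultrametrically. The only nitpick is that for $r=0$ the term $p_{-1}q_{r+s}=q_s$ in the linear coefficient has absolute value exactly $|q_0q_s|=|q_rq_{r+s}|$, not strictly smaller as you assert, but the non-strict bound is all that is needed and the conclusion stands.
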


\begin{lem}\label{lem:quadheight}
	Let $b, c, d \in \lF_q [T]$ be distinct polynomials with $\deg_T b, \deg_T c, \deg_T d \geq 1$.
	Let $n, m, \ell \geq 1$ be integers and $a_1, \ldots , a_{n - 1} \in \lF_q [T]$ be polynomials with $\deg_T a_i \geq 1$ for all $1 \leq i \leq n - 1$.
	We put
	\begin{gather*}
		\alpha_1 := [0, a_1, \ldots , a_{n - 1}, c, \overline{b}], \quad 
		\alpha_2 := [0, a_1, \ldots , a_{n - 1}, c, \overline{b ^ {[m]}, d}],\\
		\alpha_3 := [0, a_1, \ldots , a_{n - 1}, c, \overline{b ^ {[m]}, c, b ^ {[\ell]}}], \quad
		\alpha_4 := [0, a_1, \ldots , a_{n - 1}, c, \overline{(b, d) ^ {[m]}, c, (b, d) ^ {[\ell]}}].
	\end{gather*}
	Let $(p_{i, k} / q_{i, k})_{k \geq 0}$ be the convergent sequence of $\xi_i$ for $i = 1, 2, 3, 4$.
	Then we have
	\begin{gather}\label{eq:quadheight12}
		H(\alpha_1) \asymp_{b, c} \abs{q_{1, n}} ^ 2, \quad
		H(\alpha_2) \asymp _{b, c, d} \abs{q_{2, n} q_{2, n + m + 1}},\\
		\label{eq:quadheight3}
		H(\alpha_3) \asymp_{b, c} \abs{q_{3, n} q_{3, n + m + \ell + 1}}, \quad
		H(\alpha_4) \asymp_{b, c, d} \abs{q_{4, n} q_{4, n + 2m + 2\ell + 1}}.
	\end{gather}
\end{lem}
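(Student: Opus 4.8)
The plan is to express the height $H(\alpha_i)$ in terms of the denominators $q_{i,k}$ by combining two facts: the upper bound $H(\alpha_i) \leq \abs{q_{i,r} q_{i,r+s}}$ from Lemma \ref{lem:quadheightupper} applied to an appropriate choice of preperiod length $r$ and period length $s$, and a matching lower bound obtained by forcing $\alpha_i$ to be a \emph{genuine} quadratic with control on the separation $\abs{\alpha_i - \alpha_i'}$. For the lower bound I would use the Liouville-type inequality: if $P(X) = A X^2 + B X + C \in \lF_q[T][X]$ is the minimal polynomial of $\alpha_i$, then $\abs{P(\alpha_i)} = \abs{A}\,\abs{\alpha_i - \alpha_i'}\,\abs{\alpha_i - \tilde\alpha_i}$ where $\tilde\alpha_i$ is the remaining conjugate (or $\alpha_i$ itself in the inseparable case), and since $\alpha_i$ is a convergent-limit with bounded partial quotients $\abs{\alpha_i}$ and $\abs{\alpha_i'}$ are both $\asymp 1$; meanwhile $\abs{A} \geq 1$ because $A$ is a nonzero polynomial. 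Rearranging the standard relation between $\abs{\alpha_i - \alpha_i'}$, $\abs{A}$, and $H(\alpha_i)$ then pins down $H(\alpha_i)$ up to constants depending on the finitely many distinguished polynomials $b,c,d$.

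Concretely, for $\alpha_1 = [0, a_1, \ldots, a_{n-1}, c, \overline{b}]$ the period is the single letter $b$, so $r = n$, $s = 1$, and Lemma \ref{lem:quadheightupper} gives $H(\alpha_1) \leq \abs{q_{1,n} q_{1,n+1}} = \abs{q_{1,n}}^2 \abs{b}$ (using $q_{1,n+1} = b\,q_{1,n} + q_{1,n-1}$ and $\abs{b\,q_{1,n}} > \abs{q_{1,n-1}}$), which is $\asymp_{b} \abs{q_{1,n}}^2$. For the reverse inequality I would invoke Lemma \ref{lem:conj2} with $a_r = c$, $a_{r+s} = b$ (which are distinct by hypothesis): it yields $\abs{\alpha_1 - \alpha_1'} \asymp_{b,c} \abs{q_{1,n}}^{-2}$. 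Writing $P(X) = A X^2 + BX + C$ for the minimal polynomial, the identity $A(\alpha_1 - \alpha_1') = \pm\sqrt{\Disc P}/(\text{content})$ together with $H(\alpha_1) \asymp \abs{A}$ (the other coefficients being bounded by $\abs{A}$ times $\abs{\alpha_1}, \abs{\alpha_1'} \asymp 1$) forces $\abs{A} \asymp_{b,c} \abs{q_{1,n}}^2$, hence $H(\alpha_1) \asymp_{b,c} \abs{q_{1,n}}^2$. The cases $\alpha_2, \alpha_3, \alpha_4$ run in parallel: the period lengths are respectively $m+1$, $m+\ell+2$, and $2m+2\ell+2$ (counting letters in $b^{[m]},d$, in $b^{[m]},c,b^{[\ell]}$, and in $(b,d)^{[m]},c,(b,d)^{[\ell]}$), so the preperiod/period split places the relevant $q$'s at indices $n$ and $n+m+1$, $n+m+\ell+1$, $n+2m+2\ell+1$ (the period starts right after the distinguished letter $c$ at position $n$), matching the claimed formulas; in each case one checks the last partial quotient of the preperiod ($c$) differs from the last of the period (respectively $b$, $b$, $d$) so Lemma \ref{lem:conj2} applies, and one checks the period is not a single repeated letter so $\alpha_i$ is not periodic with a shorter period — this is where distinctness of $b,c,d$ is used.

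The main obstacle will be the bookkeeping in the lower bound: I must be careful that the minimal polynomial really has degree $2$ (i.e. that each $\alpha_i$ is quadratic, not rational), that the content of $(A,B,C)$ is $\asymp 1$ so that passing between $H(\alpha_i)$ and $\abs{A}$ is clean, and that the periods I have written down are \emph{primitive} — otherwise Lemma \ref{lem:conj2} is applied to the wrong indices. The condition $\deg_T b, \deg_T c, \deg_T d \geq 1$ guarantees these are legitimate partial quotients, and the distinctness of $b, c, d$ (together with $m, \ell \geq 1$) rules out the degenerate coincidences; verifying primitivity of the four period words under these hypotheses is the one genuinely case-by-case part. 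Everything else is the standard dictionary between continued fractions, convergent denominators, conjugate separation, and height of a quadratic, specialized via Lemmas \ref{lem:quadheightupper}, \ref{lem:conj2}, and \ref{lem:conti.lem}.
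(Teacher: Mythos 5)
Your upper bounds (via Lemma \ref{lem:quadheightupper}) and your index bookkeeping are fine, and your argument does work for $\alpha_1$: there Lemma \ref{lem:conj2} gives $\abs{\alpha_1-\alpha_1'}\ll_{b,c}\abs{q_{1,n}}^{-2}$, and Lemma \ref{lem:Galois} (equivalently, $\abs{\Disc}\geq 1$ in the identity $\abs{A}^2\abs{\alpha_1-\alpha_1'}^2=\abs{\Disc}$) gives $H(\alpha_1)\geq\abs{\alpha_1-\alpha_1'}^{-1}\gg_{b,c}\abs{q_{1,n}}^2$, matching the upper bound. But the claim that $\alpha_2,\alpha_3,\alpha_4$ ``run in parallel'' is where the proof breaks, and this is a genuine gap. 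Lemma \ref{lem:conj2} localizes the conjugate separation entirely at the \emph{preperiod} denominator: for each $i$ it yields $\abs{\alpha_i-\alpha_i'}\asymp\abs{q_{i,n}}^{-2}$, with no trace of the period length. So the only lower bound your method produces is $H(\alpha_i)\gg\abs{q_{i,n}}^2$, which falls short of the claimed $\abs{q_{i,n}q_{i,n+s}}$ by the factor $\abs{q_{i,n+s}}/\abs{q_{i,n}}\asymp\abs{b}^{m+\ell}\abs{c}$ (for $\alpha_3$, say), and this factor is unbounded as $m,\ell\to\infty$ --- precisely the regime in which the lemma is applied in Theorem \ref{thm:main1}. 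To extract the correct answer from the discriminant identity you would need to know $\abs{\Disc(P_i)}$ exactly, and for $i=2,3,4$ the discriminant is \emph{not} $\asymp 1$ (if the lemma is true, it is $\asymp(\abs{q_{i,n+s}}/\abs{q_{i,n}})^2$ up to constants), so ``pinning down'' $H$ this way is circular.

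The missing idea, which is how the paper proceeds, is to play $\alpha_3$ against an auxiliary quadratic that agrees with it for much longer than one period: set $\beta_3:=[0,a_1,\ldots,a_{n-1},c,b^{[m]},c,\overline{b}]$, whose height is controlled by the $\alpha_1$-type estimate ($H(\beta_3)\asymp_{b,c}\abs{q_{3,n+m+1}}^2$), and use
\begin{equation*}
1\leq\abs{\Res(P_3,Q_3)}\leq H(\beta_3)^2H(\alpha_3)^2\,\abs{\alpha_3-\beta_3}\,\abs{\alpha_3'-\beta_3}\,\abs{\alpha_3-\beta_3'}\,\abs{\alpha_3'-\beta_3'},
\end{equation*}
bounding the four differences by Lemmas \ref{lem:contidif}, \ref{lem:conti.lem} and \ref{lem:conj2} (the crucial one being $\abs{\alpha_3-\beta_3}\ll_{b,c}\abs{q_{3,n+2m+\ell+1}}^{-2}$, since the two expansions share $n+2m+\ell+1$ partial quotients). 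Solving for $H(\alpha_3)$ then gives the lower bound $\gg_{b,c}\abs{q_{3,n}q_{3,n+m+\ell+1}}$; the same device with the obvious modification handles $\alpha_4$, and $\alpha_2$ is the cited Lemma 4.7 of \cite{Ooto17}. Two smaller slips: the period lengths of $\alpha_3$ and $\alpha_4$ are $m+\ell+1$ and $2m+2\ell+1$, not $m+\ell+2$ and $2m+2\ell+2$ (your final indices are nonetheless correct); and since each $\alpha_i=[0,\ldots]$ has $\abs{\alpha_i}<1$, one has $H(P_i)=\abs{A}$ exactly by Lemma \ref{lem:height}, so no ``$\asymp 1$'' fudging is needed there.
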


\begin{proof}
	See Lemma 4.7 in \cite{Ooto17} for the proof of \eqref{eq:quadheight12}.
	
	It follows from Lemma \ref{lem:quadheightupper} that $H(\alpha_3) \leq |q_{3, n} q_{3, n + m + \ell + 1}|$.
	We put 
	\begin{equation*}
		\beta_3 :=[0, a_1, \ldots , a_{n - 1}, c, b ^ {[m]}, c, \overline{b}].
	\end{equation*}
	
	Let $P_3(X)$ and $Q_3(X)$ be the minimal polynomial of $\alpha_3$ and $\beta_3$, respectively.
	Since $P_3$ and $Q_3$ do not have a common root, we have
	\begin{equation*}
		1 \leq |\Res(P_3, Q_3)| \leq H(\beta_3) ^ 2 H(\alpha_3) ^ 2 |\alpha_3 - \beta_3| |\alpha_3' - \beta_3| |\alpha_3 - \beta_3'| |\alpha_3' - \beta_3'|.
	\end{equation*}
	By Lemma \ref{lem:contidif}, \ref{lem:conti.lem} and \ref{lem:conj2}, we obtain
	\begin{gather*}
		|\alpha_3 - \beta_3| \ll_{b, c} |q_{3, n + 2m + \ell + 1}| ^ {- 2}, \quad 
		|\alpha_3 - \beta_3'| \ll_{b, c} |q_{3, n + m + 1}| ^ {- 2},\\
		|\alpha_3' - \beta_3|, |\alpha_3' - \beta_3'| \ll_{b, c} |q_{3, n}| ^ {- 2}.
	\end{gather*}
	Therefore, by Lemma \ref{lem:conti.lem} and \ref{lem:quadheightupper}, we have $H(\alpha_3) \gg_{b, c} |q_{3, n} q_{3, n + m + \ell + 1}|$.
	
	In the same way to the above proof, we obtain $H(\alpha_4) \asymp_{b, c, d} \abs{q_{4, n} q_{4, n + 2m + 2\ell + 1}}$.
\end{proof}

\begin{lem}\label{lem:wnwn*}{\rm (\cite[Proposition 5.6]{Ooto17}).}
	Let $n \geq 1$ be an integer and $\xi \in \lF_q ((T ^ {- 1}))$.
	Then we have 
	\begin{equation*}
		w_n ^ {*} (\xi) \leq w_n (\xi).
	\end{equation*}
\end{lem}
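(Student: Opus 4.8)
**Proof proposal for Lemma \ref{lem:wnwn*} ($w_n^*(\xi) \leq w_n(\xi)$).**

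The plan is to show that every good algebraic approximation $\alpha$ to $\xi$ of degree at most $n$ yields a polynomial $P(X)$ of degree at most $n$ at which $\xi$ is correspondingly small, with comparable height. Fix an integer $n \geq 1$ and $\xi \in \lF_q((T^{-1}))$, and suppose $w^* < w_n^*(\xi)$; we aim to prove $w_n(\xi) \geq w^*$. By definition of $w_n^*$, there are infinitely many $\alpha \in \overline{\lF_q(T)}$ with $\deg \alpha \leq n$ and $0 < \abs{\xi - \alpha} \leq H(\alpha)^{-w^*-1}$. For each such $\alpha$, let $P_\alpha(X) \in (\lF_q[T])[X]$ be its minimal polynomial, so $\deg P_\alpha = \deg \alpha \leq n$ and $H(P_\alpha) = H(\alpha)$.

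First I would estimate $\abs{P_\alpha(\xi)}$. Factor $P_\alpha(X) = a_d \prod_{i=1}^{d}(X - \alpha_i)$ over the algebraic closure, where $\alpha_1 = \alpha$, $a_d \in \lF_q[T]$ is the leading coefficient, and $d = \deg \alpha$. Then
\begin{equation*}
	\abs{P_\alpha(\xi)} = \abs{a_d}\,\abs{\xi - \alpha}\prod_{i=2}^{d}\abs{\xi - \alpha_i}.
\end{equation*}
The key point is that $\abs{\xi - \alpha}$ is tiny while each remaining factor $\abs{\xi - \alpha_i}$ is controlled from above in terms of $H(\alpha)$: since $\abs{\xi - \alpha}$ is small, $\abs{\xi - \alpha_i} \leq \max(\abs{\xi - \alpha}, \abs{\alpha - \alpha_i}) \ll_n \max(1, \abs{\alpha_i})$ for $\xi$ fixed, and by standard height bounds (comparing roots to coefficients, or via Mahler-measure-type inequalities in the non-Archimedean setting) one has $\abs{a_d}\prod_{i}\max(1,\abs{\alpha_i}) \ll_n H(\alpha)$. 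Hence
\begin{equation*}
	\abs{P_\alpha(\xi)} \ll_n H(\alpha)\,\abs{\xi - \alpha} \ll_n H(\alpha)^{-w^*}.
\end{equation*}
We must also check $P_\alpha(\xi) \neq 0$: if $P_\alpha(\xi) = 0$ then $\xi = \alpha_i$ for some $i$, forcing $\xi = \alpha$ (as $\xi \in \lF_q((T^{-1}))$ and $\alpha$ is its best approximant of that degree), contradicting $\abs{\xi - \alpha} > 0$.

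Next, for the inequality $\abs{P_\alpha(\xi)} \leq H(P_\alpha)^{-w}$ to hold with $w$ close to $w^*$, I would absorb the implied constant $\ll_n$ at the cost of an arbitrarily small loss in the exponent, using that $H(\alpha) \to \infty$ along the infinite sequence (which holds because $\xi \notin \lF_q(T)$ when $w_n^*(\xi) > 0$; the degenerate rational case is trivial since then both sides are handled directly, or one notes $\abs{\xi-\alpha}$ cannot go to $0$ with bounded height unless $\xi$ is algebraic of the relevant degree, in which case the statement is vacuous or immediate). Concretely, for any $\varepsilon > 0$ and all sufficiently large $H(\alpha)$ we get $\abs{P_\alpha(\xi)} \leq H(P_\alpha)^{-(w^* - \varepsilon)}$, and distinct $\alpha$ give distinct $P_\alpha$, so there are infinitely many such polynomials of degree $\leq n$. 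Therefore $w_n(\xi) \geq w^* - \varepsilon$ for all $\varepsilon > 0$, and letting $w^* \uparrow w_n^*(\xi)$ gives $w_n(\xi) \geq w_n^*(\xi)$.

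The main obstacle is the non-Archimedean height estimate relating $\abs{a_d}\prod_i \max(1,\abs{\alpha_i})$ to $H(\alpha)$ uniformly in $\alpha$ (degree bounded by $n$): over $\lF_q((T^{-1}))$ this is the analogue of comparing the Mahler measure of a polynomial to its naive height, and one has to be careful that all conjugates $\alpha_i$ lie in a possibly larger extension with the extended absolute value. Once this uniform comparison is in hand, the rest is a routine limiting argument. (I expect the authors simply cite this from \cite{Ooto17}, as indicated.)
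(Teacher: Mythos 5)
The paper does not prove this lemma; it is quoted directly from \cite[Proposition 5.6]{Ooto17}. Your argument is the standard proof of that proposition and is essentially correct: pass from an approximant $\alpha$ to its minimal polynomial $P_\alpha$, write $\abs{P_\alpha(\xi)} = \abs{a_d}\,\abs{\xi - \alpha}\prod_{i \geq 2}\abs{\xi - \alpha_i}$, bound each $\abs{\xi - \alpha_i}$ by $\max(1,\abs{\xi})\max(1,\abs{\alpha_i})$, and compare with the height. The ``main obstacle'' you flag at the end is in fact not an obstacle here: in the non-Archimedean setting the Mahler-measure comparison is an exact identity, $H(P_\alpha) = \abs{a_d}\prod_{i}\max(1,\abs{\alpha_i})$, which is precisely Lemma \ref{lem:height} of this paper, so $\abs{P_\alpha(\xi)} \leq \max(1,\abs{\xi})^{n-1} H(P_\alpha)\,\abs{\xi - \alpha}$ with a constant depending only on $\xi$ and $n$, and your limiting argument then goes through because $H(\alpha) \to \infty$ along any infinite sequence (there are only finitely many $\alpha \in \overline{\lF_q(T)}$ of degree at most $n$ and bounded height). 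The one step that is not justified as written is $P_\alpha(\xi) \neq 0$: from $P_\alpha(\xi) = 0$ you infer $\xi = \alpha_i$ for some $i$ and then assert this forces $\xi = \alpha$, but a conjugate $\alpha_i \neq \alpha$ may perfectly well lie in $\lF_q((T^{-1}))$, so ``$\alpha$ is the best approximant'' proves nothing. The correct repair is that $P_\alpha(\xi) = 0$ would make $P_\alpha$ the minimal polynomial of $\xi$, hence $H(\alpha) = H(P_\alpha) = H(\xi)$ would take a single fixed value; since $H(\alpha) \to \infty$, this can occur for at most finitely many terms of the sequence, which may be discarded. With that patch the proof is complete and agrees with the cited one.
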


\begin{lem}\label{lem:wnlower}
	Let $n, m \geq 1$ be integers with $n \geq m$ and $\xi \in \lF_q ((T ^ {- 1}))$ be not algebraic of degree at most $m$.
	Then we have $w_n (\xi) \geq m$.
\end{lem}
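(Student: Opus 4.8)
The plan is to produce, for a given $\xi$ which is not algebraic of degree at most $m$, infinitely many polynomials $P(X) \in (\lF_q[T])[X]$ of degree at most $m$ (hence at most $n$, since $n \geq m$) with $0 < \abs{P(\xi)} \leq H(P)^{-m}$. The natural candidates come from the convergents of the continued fraction expansion $\xi = [a_0, a_1, \ldots]$. Write $p_j/q_j$ for the $j$-th convergent; since $\xi$ is irrational, the $q_j$ have strictly increasing degrees, and we have the classical estimate $\abs{\xi - p_j/q_j} = \abs{q_j q_{j+1}}^{-1} < \abs{q_j}^{-2}$.

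First I would build, for each large index $j$, the polynomial
\begin{equation*}
	P_j(X) := (q_j X - p_j)^m.
\end{equation*}
This has degree exactly $m$ in $X$ and its coefficients are products of $m$ terms each of which is $p_j$ or $q_j$, so $H(P_j) \asymp \abs{q_j}^m$ (using $\abs{p_j} \leq \abs{\xi}\abs{q_j}$ up to a bounded factor, or more simply $\abs{p_j} \asymp \abs{q_j}$ when $a_0$ is fixed; one can always normalize). Then
\begin{equation*}
	\abs{P_j(\xi)} = \abs{q_j \xi - p_j}^m = \abs{q_j}^m \abs{\xi - p_j/q_j}^m < \abs{q_j}^m \abs{q_j}^{-2m} = \abs{q_j}^{-m},
\end{equation*}
which is comparable to $H(P_j)^{-m}$ up to a multiplicative constant. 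Since $\xi$ is not algebraic of degree $\leq m$, $P_j(\xi) \neq 0$, so $0 < \abs{P_j(\xi)}$.

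The only real gap is that the estimate gives $\abs{P_j(\xi)} \ll H(P_j)^{-m}$ rather than $\leq H(P_j)^{-m}$ on the nose; the supremum in the definition of $w_n$ absorbs any fixed multiplicative constant, since $H(P_j) \to \infty$ and for any $\varepsilon > 0$ we have $\abs{P_j(\xi)} \leq H(P_j)^{-m+\varepsilon}$ for all large $j$. Hence $w_n(\xi) \geq m - \varepsilon$ for every $\varepsilon > 0$, giving $w_n(\xi) \geq m$. The main thing to be careful about is precisely this passage from $\asymp$ to the strict inequality defining the exponent, plus confirming infinitude of the $P_j$ (guaranteed because the degrees of $q_j$ are strictly increasing, so the $P_j$ are pairwise distinct). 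I do not anticipate any genuine obstacle; this is a soft "Liouville-type" lower bound and the argument above is essentially complete. An alternative, if one prefers to avoid even discussing constants, is to invoke Lemma \ref{lem:wnwn*} together with the analogous (and standard) lower bound for $w_m^{*}$ via the algebraic numbers $p_j/q_j$ of degree $1 \leq m$, but the direct computation with $(q_j X - p_j)^m$ is cleaner.
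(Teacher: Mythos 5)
There is a genuine gap, and it lies in the exponent bookkeeping. With $P_j(X)=(q_jX-p_j)^m$ you correctly get $H(P_j)\asymp\abs{q_j}^m$ and $\abs{P_j(\xi)}=\abs{q_j}^m\abs{\xi-p_j/q_j}^m=\abs{q_{j+1}}^{-m}<\abs{q_j}^{-m}$. But $\abs{q_j}^{-m}$ is comparable to $H(P_j)^{-1}$, not to $H(P_j)^{-m}$: since $H(P_j)\asymp\abs{q_j}^m$, the target $H(P_j)^{-m}$ is $\asymp\abs{q_j}^{-m^2}$. The quantity that actually controls the exponent is
\begin{equation*}
	\frac{-\log\abs{P_j(\xi)}}{\log H(P_j)}=\frac{m\deg_T q_{j+1}}{m\deg_T q_j}+o(1)=\frac{\deg_T q_{j+1}}{\deg_T q_j}+o(1),
\end{equation*}
which tends to $1$ whenever $\xi$ has bounded partial quotients (precisely the situation for the series studied in this paper). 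So your construction only yields $w_n(\xi)\geq 1$, and this is not a matter of the multiplicative constant in $\asymp$: raising $q_jX-p_j$ to the $m$-th power multiplies both $\log H(P)$ and $-\log\abs{P(\xi)}$ by $m$ and leaves their ratio unchanged. The alternative you sketch (the degree-one numbers $p_j/q_j$ combined with Lemma \ref{lem:wnwn*} and monotonicity) has the same defect and again gives only $w_n(\xi)\geq 1$.

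The missing idea is a Dirichlet-type pigeonhole argument, which is what the paper's one-line proof implicitly invokes: the sequence $(w_k(\xi))_{k\geq 1}$ is nondecreasing, and $w_m(\xi)\geq m$ holds for every $\xi$ not algebraic of degree at most $m$ (the reference to \cite[page~200]{Bugeaud04} covers this). Concretely, among the $q^{(m+1)(N+1)}$ polynomials $P=\sum_{i=0}^{m}A_iX^i$ with $\deg_T A_i\leq N$, all values $P(\xi)$ lie in a ball of radius $\ll q^{N}$, so for a suitable $M$ with $M\geq mN+O(1)$ two distinct such polynomials take values agreeing up to order $q^{-M}$; their difference $P$ satisfies $H(P)\leq q^{N}$ and $\abs{P(\xi)}\leq q^{-M}\ll H(P)^{-m}$. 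The hypothesis that $\xi$ is not algebraic of degree at most $m$ guarantees $P(\xi)\neq 0$ and, letting $N\to\infty$, the infinitude of the resulting polynomials; this gives $w_m(\xi)\geq m$, and monotonicity in $n$ finishes the proof. Your write-up needs this pigeonhole step (or an explicit citation of it); the convergent-based construction cannot replace it.
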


\begin{proof}
	Since the sequence $(w_k (\xi))_{k \geq 1}$ is increasing, we have $w_n (\xi) \geq m$ (see e.g.\ \cite[page~200]{Bugeaud04}).
\end{proof}

The following lemma is well-known and immediately seen.

\begin{lem}\label{lem:height}
	Let $P(X)$ be in $(\lF_q [T]) [X]$.
	Assume that $P(X)$ can be factorized as
	\begin{equation*}
		P(X) = A \prod_{i = 1}^{n} (X - \alpha_i),
	\end{equation*}
	where $A \in \lF_q [T]$ and $\alpha_i \in \overline{\lF_q (T)}$ for $1 \leq i \leq n$.
	Then we have
	\begin{equation*}
		H(P) = |A| \prod_{i = 1}^{n} \max (1, |\alpha_i|).
	\end{equation*}
\end{lem}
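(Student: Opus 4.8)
The plan is to use the multiplicativity of the (non-Archimedean) Gauss norm on $(\lF_q[T])[X]$, together with the fact that $\lF_q((T^{-1}))$ carries an absolute value which is well-behaved on polynomial content. First I would recall that $H$ restricted to $(\lF_q[T])[X]$ coincides with the Gauss norm: for $G(X) = \sum_j c_j X^j$ with $c_j \in \lF_q[T]$, one has $H(G) = \max_j |c_j|$, and this is multiplicative, i.e. $H(GG') = H(G)H(G')$, because the residue field of the non-Archimedean valuation on $\lF_q((T^{-1}))$ is $\lF_q$ (so the reductions of primitive polynomials multiply to a nonzero polynomial over $\lF_q$ — this is Gauss's lemma in the characteristic-$p$ setting). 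I would state this as a known fact, or prove it in one line via reduction modulo $T^{-1}$.

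Next I would reduce the statement to the case of a linear factor. By multiplicativity it suffices to prove that for a single $\alpha \in \overline{\lF_q(T)}$, the polynomial $X - \alpha$, after clearing denominators, has height $\max(1, |\alpha|)$ in the appropriate normalized sense; more precisely, I would show that for $\alpha_1,\dots,\alpha_n$ one has $H\bigl(\prod_{i=1}^n (X-\alpha_i)\bigr) = \prod_{i=1}^n \max(1,|\alpha_i|)$ as an identity in the extended absolute value, even though the individual factors $X - \alpha_i$ need not lie in $(\lF_q[T])[X]$. The clean way to do this: let $L$ be a finite extension of $\lF_q(T)$ containing all the $\alpha_i$, extend the Gauss norm to $L[X]$ (still multiplicative, since the absolute value on $L$ extends uniquely and the Gauss norm construction is insensitive to the base field), and compute $H(X - \alpha_i) = \max(1, |\alpha_i|)$ directly from the definition of the Gauss norm. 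Multiplying over $i$ gives $H\bigl(\prod_i (X-\alpha_i)\bigr) = \prod_i \max(1,|\alpha_i|)$, and then $H(P) = H(A) \cdot \prod_i \max(1,|\alpha_i|) = |A|\prod_i \max(1,|\alpha_i|)$, using that $A \in \lF_q[T]$ so $H(A) = |A|$.

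The one point requiring care — and the main (mild) obstacle — is justifying that the Gauss norm remains multiplicative after passing to the finite extension $L$ of $\lF_q(T)$: over $\lF_q(T)$ itself multiplicativity is Gauss's lemma with residue field $\lF_q$, but over $L$ the residue field of the extended valuation could be a larger finite field $\lF_{q^e}$. This is still a field, so Gauss's lemma still applies and multiplicativity holds; I would just remark that the extended valuation on $L$ is discrete (or at least that its residue ring is a domain), which is all Gauss's lemma needs. Alternatively, one can sidestep $L$ entirely: group the $\alpha_i$ into Galois orbits, so that each orbit product lies in $\lF_q(T)[X]$, clear the (polynomial) denominator of that orbit product, and compute its height orbit-by-orbit using the standard one-variable estimate $H\bigl(a\prod (X-\beta_j)\bigr) = |a|\prod\max(1,|\beta_j|)$ for a single irreducible polynomial over $\lF_q(T)$; then reassemble via multiplicativity over $\lF_q(T)$. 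Either route is routine once the multiplicativity of $H$ is in hand, so I would present the $L[X]$ version as the shortest path and keep the verification of Gauss's lemma over $L$ to a sentence.
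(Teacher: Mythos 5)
Your argument is correct and is the standard justification: the paper itself gives no proof (it declares the lemma ``well-known and immediately seen''), and the Gauss-norm multiplicativity argument you outline, extended to a finite extension $L$ of $\lF_q(T)$ containing the roots and combined with the computation $H(X-\alpha_i)=\max(1,|\alpha_i|)$, is precisely the intended reasoning. Your worry about multiplicativity over $L$ is correctly resolved by your own remark that Gauss's lemma only needs the residue ring of the valuation to be a domain, which holds for any non-Archimedean valued field.
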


The following lemma is immediately seen by the definition of discriminant.

\begin{lem}\label{lem:Galois}
	Let $\alpha \in \overline{\lF_q (T)}$ be a quadratic number.
	If $\alpha \neq \alpha'$, then we have
	\begin{equation*}
		|\alpha - \alpha'| \geq H(\alpha) ^ {- 1}.
	\end{equation*}
\end{lem}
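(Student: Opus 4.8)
The plan is to relate $\alpha - \alpha'$ to the discriminant of the minimal polynomial of $\alpha$. Since $\alpha$ is a quadratic number, its minimal polynomial has the form $P(X) = aX^2 + bX + c$ with $a, b, c \in \lF_q[T]$, $a$ monic and nonzero; by definition $H(\alpha) = H(P) \geq \abs{a}$. First I would observe that, as $\alpha \neq \alpha'$, the polynomial $P$ is separable, so its discriminant $\Disc(P) = b^2 - 4ac$ is a \emph{nonzero} element of $\lF_q[T]$, whence $\abs{\Disc(P)} \geq 1$.

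Next I would invoke the elementary identity $\Disc(P) = a^2 (\alpha - \alpha')^2$, valid for any quadratic polynomial with roots $\alpha, \alpha'$, and pass to absolute values: $\abs{a}^2 \abs{\alpha - \alpha'}^2 = \abs{\Disc(P)} \geq 1$. Dividing by $\abs{a}^2$ and using $\abs{a} \leq H(P) = H(\alpha)$ then yields $\abs{\alpha - \alpha'} \geq \abs{a}^{-1} \geq H(\alpha)^{-1}$, which is exactly the assertion.

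There is no real obstacle; the one spot warranting a remark is characteristic two, where $\Disc(P) = b^2$. Here one must note that $\alpha \neq \alpha'$, that is $\insep \alpha = 1$, forces $b \neq 0$ — otherwise $P(X) = aX^2 + c$ would be a constant multiple of the square of a linear polynomial over $\overline{\lF_q(T)}$ and $\alpha$ would coincide with $\alpha'$ — so $\Disc(P)$ is again nonzero and the computation above applies unchanged.
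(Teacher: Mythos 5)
Your proof is correct and is precisely the argument the paper has in mind: the paper gives no written proof, stating only that the lemma "is immediately seen by the definition of discriminant," and your computation $\abs{a}^2\abs{\alpha-\alpha'}^2 = \abs{\Disc(P)} \geq 1$ together with $\abs{a} \leq H(\alpha)$ fills in exactly that. The extra care you take in characteristic two (ruling out $b=0$ when $\alpha \neq \alpha'$) is a welcome detail the paper leaves implicit.
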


The following two lemmas are key lemmas for proving Theorem \ref{thm:main1} and \ref{thm:main2}.

\begin{lem}\label{lem:bestquad}{\rm (\cite[Lemma 3.19]{Ooto18}).}
	Let $d \geq 2$ be an integer.
	Let $\xi$ be in $\lF_q ((T ^ {- 1}))$, $\theta , \delta$ be positive numbers, and $\varepsilon$ be a non-negative number.
	Assume that there exist a sequence $(\alpha_j)_{j \geq 1}$ and positive number $c$ such that for any $j \geq 1$, $\alpha_j \in \overline{\lF_q (T)}$ is quadratic with $0 < |\alpha_j - \alpha_j'| \leq c$ and $\xi \neq \alpha_j$,  $(H(\alpha_j))_{j \geq 1}$ is a divergent increasing sequence, and
	\begin{gather*}
		\limsup_{k \rightarrow \infty } \frac{\log H(\alpha_{k + 1})}{\log H(\alpha_k)} \leq \theta ,\\
		\lim_{k \rightarrow \infty } \frac{- \log |\xi - \alpha_k|}{\log H(\alpha_k)} = d + \delta , \quad \lim_{k \rightarrow \infty } \frac{- \log |\alpha_k - \alpha_k'|}{\log H(\alpha_k)} = \varepsilon .
	\end{gather*}
	If $2 d \theta \leq (d - 2 + \delta ) \delta $, then we have for all $2 \leq n \leq d$,
	\begin{equation}\label{eq:bestquad_eq}
		w_n ^ {*} (\xi) = d - 1 + \delta , \quad w_n(\xi) = d - 1 + \delta + \varepsilon .
	\end{equation}
\end{lem}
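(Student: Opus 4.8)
The plan is to prove the two equalities in \eqref{eq:bestquad_eq} by establishing matching lower and upper bounds, the lower bounds being elementary and the upper bounds being where the hypothesis $2d\theta \leq (d-2+\delta)\delta$ enters. Throughout I write $o(1)$ for a quantity tending to $0$ as the relevant height tends to infinity. For the lower bounds, note first that each $\alpha_j$ is quadratic, hence separable (since $|\alpha_j-\alpha_j'|>0$) and of degree $2\leq n$, and $\xi\neq\alpha_j$; also $\varepsilon\leq 1$ by Lemma \ref{lem:Galois}. From the hypotheses $|\xi-\alpha_k|=H(\alpha_k)^{-(d+\delta)+o(1)}$ and $H(\alpha_k)\to\infty$, so $w_n^*(\xi)\geq d-1+\delta$ directly from the definition. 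For $w_n$, let $P_k(X)=c_k(X-\alpha_k)(X-\alpha_k')$ be the minimal polynomial of $\alpha_k$; since $|\xi-\alpha_k|\to 0$ is eventually smaller than $|\alpha_k-\alpha_k'|=H(\alpha_k)^{-\varepsilon+o(1)}$ we get $|\xi-\alpha_k'|=|\alpha_k-\alpha_k'|=H(\alpha_k)^{-\varepsilon+o(1)}$, and Lemma \ref{lem:height} gives $|c_k|\asymp H(\alpha_k)$ (the moduli of $\alpha_k,\alpha_k'$ being bounded). Hence $0<|P_k(\xi)|=|c_k|\,|\xi-\alpha_k|\,|\xi-\alpha_k'|=H(P_k)^{-(d-1+\delta+\varepsilon)+o(1)}$, so $w_n(\xi)\geq d-1+\delta+\varepsilon$.

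For the upper bound on $w_n^*$ it suffices to treat $n=d$. Fix $\eta>0$ and suppose, for contradiction, that $|\xi-\beta|\leq H(\beta)^{-(d+\delta+\eta)}$ for infinitely many $\beta\in\overline{\lF_q(T)}$ of degree at most $d$. Only finitely many $\alpha_j$ satisfy this, so we may assume $\beta\notin\{\alpha_j\}_j$ and $H(\beta)\to\infty$. Let $k=k(\beta)$ be the largest index with $|\xi-\alpha_k|\geq|\xi-\beta|$; then $k(\beta)\to\infty$, and by the ultrametric inequality $|\alpha_k-\beta|\leq|\xi-\alpha_k|=H(\alpha_k)^{-(d+\delta)+o(1)}$ while $|\alpha_{k+1}-\beta|=|\xi-\beta|\leq H(\beta)^{-(d+\delta+\eta)}$. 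Now apply the Liouville-type inequality $1\leq|\Res(P_{\alpha_k},P_\beta)|$ (valid as $P_{\alpha_k},P_\beta$ are coprime), expand the resultant as $|c_{\alpha_k}|^{\deg\beta}|c_\beta|^2\prod_{i,j}|\alpha_k^{(i)}-\beta^{(j)}|$, and bound each factor using Lemma \ref{lem:height} and the ultrametric inequality: the factors with a conjugate far from $\xi$ reassemble into $H(\alpha_k)^d H(\beta)^2$, the dominant remaining factor being $|\alpha_k-\beta|$. This yields $|\alpha_k-\beta|\gg H(\alpha_k)^{-d}H(\beta)^{-2}$, and combined with the upper estimate on $|\alpha_k-\beta|$ gives $H(\beta)\gg H(\alpha_k)^{\delta/2-o(1)}$. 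The same inequality for $P_{\alpha_{k+1}},P_\beta$ gives $1\ll H(\alpha_{k+1})^dH(\beta)^2|\xi-\beta|$, hence $H(\beta)\ll H(\alpha_{k+1})^{d/(d-2+\delta+\eta)}$. Inserting $H(\alpha_{k+1})\leq H(\alpha_k)^{\theta+o(1)}$ and letting $k\to\infty$ forces $\delta(d-2+\delta+\eta)\leq 2\theta d\leq\delta(d-2+\delta)$, i.e.\ $\eta\leq 0$, a contradiction. Thus $w_n^*(\xi)=d-1+\delta$ for $2\leq n\leq d$.

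For the upper bound on $w_n$, again take $n=d$, fix $\eta>0$, and suppose $0<|P(\xi)|\leq H(P)^{-(d-1+\delta+\varepsilon+\eta)}$ for infinitely many $P\in(\lF_q[T])[X]$ of degree at most $d$. Let $\gamma$ be a root of $P$ closest to $\xi$, with minimal polynomial $P_\gamma\mid P$. Writing $P=P_\gamma Q$, Lemma \ref{lem:height} gives $H(P)=H(P_\gamma)H(Q)$ and, since the roots of $P$ lying at bounded distance from $\xi$ contribute comparably to $|P(\xi)|$ and to $H(P)$, one gets $|P(\xi)|\asymp H(P)\prod|\xi-\gamma_i|$, the product over the roots $\gamma_i$ of $P$ near $\xi$. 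In the model case where $\gamma$ is the only such root, $|\xi-\gamma|\asymp|P(\xi)|/H(P)\leq H(P)^{-(d+\delta+\varepsilon+\eta)}$, and running the resultant sandwich of the previous paragraph with $\beta$ replaced by $\gamma$—now keeping the extra factor $|\gamma-\alpha_k'|=|\alpha_k-\alpha_k'|=H(\alpha_k)^{-\varepsilon+o(1)}$ in $\Res(P_\gamma,P_{\alpha_k})$, which upgrades the lower bound to $H(P)\gg H(\gamma)\gg H(\alpha_k)^{(\delta+\varepsilon)/2-o(1)}$—yields $(\delta+\varepsilon)(d-2+\delta+\varepsilon+\eta)\leq 2\theta d\leq\delta(d-2+\delta)\leq(\delta+\varepsilon)(d-2+\delta+\varepsilon)$, hence $\eta\leq 0$; when $\gamma=\alpha_j$ one argues the same way after factoring out $P_{\alpha_j}$ and using $|P_{\alpha_j}(\xi)|=H(\alpha_j)^{-(d-1+\delta+\varepsilon)+o(1)}$ together with $|c_Q|\geq 1$. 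Combined with the lower bound, $w_n(\xi)=d-1+\delta+\varepsilon$ for $2\leq n\leq d$.

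I expect the genuine obstacle to be the case in which $P$ has several roots clustered near $\xi$: then $|P(\xi)|$ splits into several small factors, and one must run the resultant estimates against $\alpha_k$ and $\alpha_{k+1}$ simultaneously for all the clustered roots, choosing the index $k$ so that $\alpha_k$ separates the cluster from $\xi$ on one side and $\alpha_{k+1}$ on the other. One then checks that the resulting inequality still contradicts $2d\theta\leq(d-2+\delta)\delta$—and in fact a fortiori, since the exponent of $H(\alpha_k)$ produced by the lower-bound step only increases with the number of clustered roots—while verifying that all the $o(1)$ terms and implied constants are uniform over the sequence of $P$'s. This bookkeeping is the technical heart of the argument; the resultant computations themselves are routine.
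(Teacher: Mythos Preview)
The paper does not contain a proof of this lemma: it is quoted verbatim from \cite[Lemma~3.19]{Ooto18} and used as a black box in the proofs of Theorems~\ref{thm:main1} and~\ref{thm:main2}. So there is no in-paper argument to compare your attempt against; any comparison would have to be with the cited reference.

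On the merits of your attempt: the lower bounds and the upper bound for $w_n^{*}$ are carried out correctly and follow the standard resultant--sandwich strategy (your computation $H(\alpha_k)^{\delta/2-o(1)}\le H(\beta)\le H(\alpha_{k+1})^{d/(d-2+\delta+\eta)}$ combined with $\log H(\alpha_{k+1})\le(\theta+o(1))\log H(\alpha_k)$ indeed forces $\eta\le 0$). The lower bound for $w_n$ is essentially the content of Lemma~\ref{lem:bestquad2} in this paper, and your write-up of it is fine.

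The upper bound for $w_n$, however, is not complete. You treat only the ``model case'' of a single root of $P$ near $\xi$ and explicitly defer the clustered-root case as ``bookkeeping.'' That case is where the actual work lies, and it is not purely mechanical: when several roots $\gamma_1,\dots,\gamma_m$ of $P$ lie near $\xi$ at possibly different scales, there is no single index $k$ for which the sandwich $|\xi-\alpha_k|\ge|\xi-\gamma_i|>|\xi-\alpha_{k+1}|$ holds for all $i$, and the passage from $H(P)$ to the heights $H(\gamma_i)$ of the individual irreducible factors has to be tracked through Gauss's lemma together with the factorisation $|P(\xi)|/H(P)=\prod_i|\xi-\gamma_i|$ (valid up to bounded factors once $|\xi|$ and the $|\gamma_i|$ are controlled). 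One also has to check that the extra conjugate factor $|\gamma_i-\alpha_k'|$ really equals $|\alpha_k-\alpha_k'|$, which requires $|\gamma_i-\alpha_k|<|\alpha_k-\alpha_k'|$ and is not automatic at every scale. Until these points are written out, the $w_n$ half of \eqref{eq:bestquad_eq} remains a sketch rather than a proof; for a full argument you should consult \cite{Ooto18}.
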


\begin{lem}\label{lem:bestquad2}
	Let $\xi$ be in $\lF_q ((T ^ {- 1})) \setminus \lF_q (T)$.
	Assume that there exists a sequence $(\alpha_j)_{j \geq 1}$ such that for any $j \geq 1$, $\alpha_j \in \overline{\lF_q (T)}$ is quadratic with $\alpha_j \neq \alpha_j'$ and $\xi \neq \alpha_j$, and $(H(\alpha_j))_{j \geq 1}$ is a divergent increasing sequence.
	If there exist limits of the sequences 
	\begin{equation*}
		\left( \frac{- \log |\xi - \alpha_j|}{\log H(\alpha_j)}\right) _{j \geq 1}\ \text{and}\ \left( \frac{- \log |\alpha_j - \alpha_j'|}{\log H(\alpha_j)}\right) _{j \geq 1},
	\end{equation*}
	then we have
	\begin{gather}\label{eq:quad1}
		w_2 ^ {*} (\xi) \geq \lim_{j \rightarrow \infty } \frac{- \log |\xi - \alpha_j|}{\log H(\alpha_j)} - 1,\\
		\label{eq:quad2}
		w_2 (\xi) \geq \lim_{j \rightarrow \infty } \frac{- \log |\xi - \alpha_j|}{\log H(\alpha_j)} + \lim_{j \rightarrow \infty }\frac{- \log |\alpha_j - \alpha_j'|}{\log H(\alpha_j)} - 1.
	\end{gather}
\end{lem}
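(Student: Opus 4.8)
The plan is to derive both inequalities directly from the definitions of $w_2^*$ and $w_2$ by viewing the quadratic numbers $\alpha_j$ themselves as the approximants witnessing the supremum. Write $L := \lim_{j \to \infty} \frac{-\log|\xi - \alpha_j|}{\log H(\alpha_j)}$ and $M := \lim_{j \to \infty} \frac{-\log|\alpha_j - \alpha_j'|}{\log H(\alpha_j)}$, both of which exist by hypothesis. Note that since $\xi \notin \lF_q(T)$ and the $\alpha_j$ are quadratic with $\alpha_j \neq \alpha_j'$, we have $\xi \neq \alpha_j$ and $|\xi - \alpha_j| \neq 0$; moreover $(H(\alpha_j))_{j}$ is a divergent increasing sequence, so we genuinely have infinitely many distinct approximants of bounded degree available.

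For \eqref{eq:quad1}, fix any $\varepsilon > 0$. By definition of $L$, for all sufficiently large $j$ we have $|\xi - \alpha_j| \leq H(\alpha_j)^{-(L - \varepsilon)}$, which we rewrite as $|\xi - \alpha_j| \leq H(\alpha_j)^{-(L - 1 - \varepsilon) - 1}$. Since the $\alpha_j$ are quadratic (hence of degree at most $2$) with divergent heights, they furnish infinitely many algebraic elements of degree at most $2$ satisfying the defining inequality for $w_2^*$ with exponent $L - 1 - \varepsilon$. Hence $w_2^*(\xi) \geq L - 1 - \varepsilon$, and letting $\varepsilon \to 0$ gives $w_2^*(\xi) \geq L - 1$.

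For \eqref{eq:quad2}, the idea is to pass from $\alpha_j$ to its minimal polynomial $P_j(X)$ and estimate $|P_j(\xi)|$. Writing $P_j(X) = A_j (X - \alpha_j)(X - \alpha_j')$ with $A_j \in \lF_q[T]$ (here using $\insep \alpha_j = 1$, which is forced by $\alpha_j \neq \alpha_j'$), we get $|P_j(\xi)| = |A_j|\,|\xi - \alpha_j|\,|\xi - \alpha_j'|$. Since $|\xi - \alpha_j| \to 0$ while $|\alpha_j - \alpha_j'|$ may also tend to $0$ but more slowly in a controlled way, the ultrametric inequality gives $|\xi - \alpha_j'| = |\alpha_j - \alpha_j'|$ for $j$ large (the two terms $\xi - \alpha_j$ and $\alpha_j - \alpha_j'$ have different valuations once $L > M$, and the degenerate case is handled separately or absorbed into the limit). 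By Lemma \ref{lem:height}, $H(P_j) = |A_j| \max(1,|\alpha_j|)\max(1,|\alpha_j'|)$, and since $H(\alpha_j)$ is by definition $H(P_j)$, comparing $|A_j|$ with $H(\alpha_j)$ costs only the bounded factors $\max(1,|\alpha_j|)\max(1,|\alpha_j'|)$, which contribute nothing to the exponential order. Combining, $\frac{-\log|P_j(\xi)|}{\log H(P_j)} \to L + M$ up to the negligible correction, so for any $\varepsilon > 0$ and $j$ large we have $0 < |P_j(\xi)| \leq H(P_j)^{-(L + M - 1 - \varepsilon)}$, whence $w_2(\xi) \geq L + M - 1 - \varepsilon$, and again $\varepsilon \to 0$ finishes.

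The main obstacle is the bookkeeping around the absolute value $|\xi - \alpha_j'|$: one needs to argue cleanly that this equals $|\alpha_j - \alpha_j'|$ asymptotically (in the exponential sense needed for the limit), using that $|\xi - \alpha_j|$ is much smaller, and to check that the ratio $|A_j| / H(\alpha_j)$ stays within bounded multiplicative constants so that it disappears in the $\log/\log$ limit. One should also keep in mind the edge case where the limit $L + M$ might need the convention that the degenerate contribution is controlled; but since we only claim a lower bound, even a conservative estimate of $|P_j(\xi)|$ suffices, so no sharp analysis is required — every inequality is allowed to lose a bit. Lemmas \ref{lem:height} and \ref{lem:Galois} (the latter ensuring $|\alpha_j - \alpha_j'| \geq H(\alpha_j)^{-1}$, hence $M \leq 1$, a sanity check rather than a logical necessity) are the only external inputs needed.
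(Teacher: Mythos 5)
Your proposal is correct and follows essentially the same route as the paper: \eqref{eq:quad1} is read off from the definition of $w_2^{*}$, and \eqref{eq:quad2} is obtained by evaluating the minimal polynomial $P_j(X)=A_j(X-\alpha_j)(X-\alpha_j')$ at $\xi$, using the ultrametric identity $|\xi-\alpha_j'|=|\alpha_j-\alpha_j'|$ together with Lemma \ref{lem:height} to compare $|A_j|$ with $H(P_j)$. The ``degenerate case'' you leave slightly vague is settled in the paper exactly along the lines you indicate: Lemma \ref{lem:Galois} forces $M\leq 1$, and when $L\leq 1$ or $M\leq 0$ the claimed bound already follows from Lemmas \ref{lem:wnlower} and \ref{lem:wnwn*}, so the main computation only needs to be carried out when $L>1\geq M$, where the strict inequality $|\xi-\alpha_j|<|\alpha_j-\alpha_j'|\leq 1$ holds for all large $j$.
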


\begin{proof}
	The inequality \eqref{eq:quad1} is immediately seen.
	In what follows, we show \eqref{eq:quad2}.
	We put
	\begin{equation*}
		\gamma_1 := \lim_{j \rightarrow \infty } \frac{- \log |\xi - \alpha_j|}{\log H(\alpha_j)}, \quad \gamma_2 := \lim_{j \rightarrow \infty } \frac{- \log |\alpha_j - \alpha_j'|}{\log H(\alpha_j)}.
	\end{equation*}
	
	We may assume that $\gamma_1 > 1 $ and $\gamma_2 > 0$.
	In fact, if $\gamma_2 \leq 0$, then we have \eqref{eq:quad2} by Lemma \ref{lem:wnwn*}.
	By Lemma \ref{lem:Galois}, we have $\gamma_2 \leq 1$, which implies $\gamma_2 < \gamma_1$.
	If $\gamma_1 \leq 1$, then we obtain \eqref{eq:quad2} by Lemma \ref{lem:wnlower}.
	
	Then we obtain
	\begin{equation*}
		|\xi - \alpha_j| < |\alpha_j - \alpha_j'| \leq 1
	\end{equation*}
	for all sufficiently large $j$.
	Therefore, we have
	\begin{gather*}
		\max (1, |\xi|) = \max (1, |\alpha_j|) = \max (1, |\alpha_j'|),\\
		|\xi - \alpha_j'| = |\alpha_j - \alpha_j'|
	\end{gather*}
	for all sufficiently large $j$.
	It follows from Lemma \ref{lem:height} that
	\begin{equation*}
		H(P_j) = |A_j| \max (1, |\xi|) ^ 2
	\end{equation*}
	for all sufficiently large $j$, where $P_j (X) = A_j (X - \alpha_j) (X - \alpha_j ')$ is the minimal polynomial of $\alpha_j$.
	Hence, we have
	\begin{equation*}
		|P_j (\xi)| = H(P_j) |\xi - \alpha_j| |\alpha_j - \alpha_j'| \max (1, |\xi|) ^ {- 2}
	\end{equation*}
	for all sufficiently large $j$.
	Thus, we obtain
	\begin{equation*}
		\lim_{j\rightarrow \infty }\frac{- \log |P_j (\xi)|}{\log H(P_j)} = \gamma_1 + \gamma_2 - 1,
	\end{equation*}
	which implies \eqref{eq:quad2}.
\end{proof}


\section{Proof of Main results}\label{sec:proof}

In this section, we prove the main results, that is, Theorem \ref{thm:neq_alg}, \ref{thm:main1}, \ref{thm:main2}, Corollary \ref{cor:main1} and \ref{cor:main2}.

\begin{proof}[Proof of Theorem \ref{thm:main1}]
	For an integer $n \geq 3$, we put
	\begin{gather*}
		\alpha_n :=
		\begin{cases}
			[0, T ^ {[k]}, \bigoplus_{1 \leq i \leq n - 1}(T, (\lambda T, \mu T) ^ {[(r ^ i - 1) / 2]}) ^ {[k + 1]}, T, \overline{\lambda T, \mu T}] & \text{if}\ p \neq 2,\\
			[0, T ^ {[k]}, \bigoplus_{1 \leq i \leq n - 1}(T, \lambda T ^ {[r ^ i - 1]}) ^ {[k + 1]}, T, \overline{\lambda T}] & \text{if}\ p = 2,
		\end{cases}\\
		\beta_n :=
		\begin{cases}
			[0, T ^ {[k]}, \bigoplus_{1 \leq i \leq n - 1} (T, (\lambda T, \mu T) ^ {[(r ^ i - 1) / 2]}) ^ {[k + 1]}, \overline{T, (\lambda T, \mu T) ^ {[(r ^ n - 1) / 2]}}] & \text{if}\ p \neq 2,\\
			[0, T ^ {[k]}, \bigoplus_{1 \leq i \leq n - 1} (T, \lambda T ^ {[r ^ i - 1]}) ^ {[k + 1]}, \overline{T, \lambda T ^ {[r ^ n - 1]}}] & \text{if}\ p = 2.
		\end{cases}
	\end{gather*}
	Then we have
	\begin{align*}
		\beta_n =
		[0, T ^ {[k]}, \bigoplus_{1 \leq i \leq n - 2}(T, (\lambda T, & \mu T) ^ {[(r ^ i - 1) / 2]}) ^ {[k + 1]}, (T, (\lambda T, \mu T) ^ {[(r ^ {n - 1} - 1) / 2]})^{[k]},\\
		& T, \overline{(\lambda T, \mu T) ^ {[(r ^ {n - 1} - 1) / 2]}, T, (\lambda T, \mu T) ^ {[(r ^ n - r ^ {n - 1}) / 2]}}]	
	\end{align*}
	if $p \neq 2$, and
	\begin{align*}
		\beta_n	=
		[0, T ^ {[k]}, \bigoplus_{1 \leq i \leq n - 2} (T, \lambda T ^ {[r ^ i - 1]}) ^ {[k + 1]}, (T, \lambda T ^ {[r ^ {n - 1} - 1]}) ^ {[k]}, T, \overline{\lambda T ^ {[r ^ {n - 1} - 1]}, T, \lambda T ^ {[r ^ n - r ^ {n - 1}]}}]
	\end{align*}
	if $p = 2$.
	It follows from Lemma \ref{lem:conti.lem} and \ref{lem:quadheight} that
	\begin{equation*}
		H(\alpha_n) \asymp q ^ {2 (k + 1) (r ^ n - 1) / (r - 1)}, \quad
		H(\beta_n) \asymp q ^ {2 (k + 1) (r ^ n - 1) / (r - 1) + r ^ n - 2 r ^ {n - 1}}.
	\end{equation*}
	Since $\Theta_k ^ {t} (\lambda)$ and $\alpha_n$ have the same first $((k + 1) \sum_{i = 0} ^ {n - 1} r ^ i + r ^ n - 1)$-th partial quotients, while the next partial quotient are different, we have
	\begin{equation*}
		\abs{\Theta_k ^ {t} (\lambda) - \alpha_n} \asymp q ^ {- 2 (k + 1) (r ^ n - 1) / (r - 1) - 2 r ^ n}
	\end{equation*}
	by Lemma \ref{lem:contidif} and \ref{lem:conti.lem}.
	Similary, since $\Theta_k ^ {t} (\lambda)$ and $\beta_n$ have the same first $((k + 1) \sum_{i = 0} ^ {n} r ^ i + r ^ n - 1)$-th partial quotients, while the next partial quotient are different, we get
	\begin{equation*}
		\abs{\Theta_k ^ {t} (\lambda) - \beta_n} \asymp q ^ {- 2 (k + 1) (r ^ {n + 1} - 1) / (r - 1) - 2 r ^ n}.
	\end{equation*}
	By Lemma \ref{lem:conti.lem} and \ref{lem:conj2}, we obtain
	\begin{equation*}
		\abs{\alpha_n - \alpha_n'} \asymp q ^ {- 2 (k + 1) (r ^ n - 1) / (r - 1)},\quad
		\abs{\beta_n - \beta_n'} \asymp q ^ {- 2 (k + 1) (r ^ n - 1) / (r - 1) + 2 r ^ {n - 1}}.
	\end{equation*}
	Therefore, we deduce that
	\begin{gather*}
		\lim_{n \rightarrow \infty } \frac{\log H(\alpha_{n + 1})}{\log H(\alpha_n)} = \lim_{n \rightarrow \infty } \frac{\log H(\beta_{n + 1})}{\log H(\beta_n)} = r,\\
		\lim_{n \rightarrow \infty } \frac{- \log \abs{\Theta_k ^ {t} (\lambda) - \alpha_n}}{\log H(\alpha_n)} = 1 + \frac{r - 1}{k + 1},\quad 
		\lim_{n \rightarrow \infty } \frac{- \log \abs{\alpha_n - \alpha_n'}}{\log H(\alpha_n)} = 1,\\
		\lim_{n \rightarrow \infty } \frac{- \log \abs{\Theta_k ^ {t} (\lambda) - \beta_n}}{\log H(\beta_n)} = r - \frac{r (r - 1)(r - 4)}{2 r (k + 1) + (r - 1) (r - 2)},\\
		\lim_{n \rightarrow \infty } \frac{- \log \abs{\beta_n - \beta_n'}}{\log H(\beta_n)} = 1 - \frac{r (r - 1)}{2 r (k + 1) + (r - 1)(r - 2)}.
	\end{gather*}
	Hence, we obtain \eqref{eq:w2*} and \eqref{eq:w2} by Lemma \ref{lem:bestquad2}.
	
	If \eqref{eq:maincond1} holds, then we heve
	\begin{equation*}
		2 r d \leq \left( \frac{r - 1}{k + 1} - 1\right) \left( \frac{r - 1}{k + 1} - d + 1 \right) .
	\end{equation*}
	Therefore, by Lemma \ref{lem:bestquad}, we obtain \eqref{eq:mainequal1}.
	
	Similaly, if \eqref{eq:maincond2} holds, then we heve \eqref{eq:mainequal2} and \eqref{eq:mainequal22} by Lemma \ref{lem:bestquad}.
\end{proof}

\begin{proof}[Proof of Corollary \ref{cor:main1}]
	It follows from \eqref{eq:hyperquad} that $\deg \Theta_k ^ {t} (\lambda) \leq r+1$.
	If \eqref{eq:main1_cor_cond} holds, then we have $w_2(\Theta_k ^ {t} (\lambda)) > r-1$ by Theorem \ref{thm:main1}.
	Therefore, by Lemma \ref{lem:alg}, we obtain $\deg \Theta_k ^ {t} (\lambda) = r+1$.
\end{proof}

\begin{proof}[Proof of Theorem \ref{thm:neq_alg}]
	It follows from $q \geq 4$ that we can take $\lambda \in \lF_q ^ {*}$ with $\lambda \neq 1$ and $\lambda \neq 2$.
	Let $n \geq 2$ be integers.
	Since $r \geq (3n + 2 + \sqrt{9n^2 + 4n + 4})/2$, we take $r = p^t$, where $t \geq 0$ is an integer with
	\begin{equation*}
		0 \leq \frac{2(r - 1)}{n + \sqrt{n ^ 2 + 4(2r - 1)n + 4}} - 1.
	\end{equation*}
	Then, by Theorem \ref{thm:main1} and Corollary \ref{cor:main1}, we have $w_n(\Theta_0 ^ {t} (\lambda)) \neq w_n^{*}(\Theta_0 ^ {t} (\lambda))$ and $\deg \Theta_0 ^ {t} (\lambda) = r + 1$.
\end{proof}

\begin{proof}[Proof of Theorem \ref{thm:main2}]
	Since $w_n (\xi) = w_n (\xi^{- 1})$ and $w_n ^ {*} (\xi) = w_n ^ {*} (\xi ^{- 1})$ for all $0 \neq \xi \in \lF_q ((T ^ {- 1}))$ and $n \geq 1$, we consider $\Phi := \Phi_{\ell} ^ t({\bm \lambda}; {\bm \varepsilon}) ^ {- 1} = [0, \lambda_1T, \lambda_2T, \ldots ]$ instead of $\Phi_{\ell} ^ t ({\bm \lambda}; {\bm \varepsilon})$.
	By the assumption, we obtain
	\begin{equation*}
		\lambda_n
		=
		\begin{cases}
			\lambda_{i + 1} ^ {r ^ j} & \text{if}\ n = 1 + \ell \sum_{h = 0} ^ {j - 1} r ^ h + i r ^ j \ \text{for some}\ j \geq 0, 0 \leq i < \ell ,\\
			1 & \text{otherwise}.
		\end{cases}
	\end{equation*}
	
	First, we prove (1).
	For an integer $n \geq 1$, we put
	\begin{equation*}
		\alpha_n := [0, \lambda_1T, \lambda_2T, \ldots ,\lambda_{i(n)}T, \overline{T}],
	\end{equation*}
	where $i(n) = 1 + \ell \sum_{j = 0} ^ {n - 1} r ^ j$.
	Then we have $\lambda_{i(n)} = \lambda ^ {r ^ n}$.
	In a similar way to the proof of Theorem \ref{thm:main1}, we obtain
	\begin{gather*}
	\abs{\Phi - \alpha_n} \asymp q ^ {- 2 \ell (r ^ n - 1) / (r - 1) - 2 m r ^ n},\quad 
	\abs{\alpha_n - \alpha_n'} \asymp q ^ {- 2 \ell (r ^ n - 1) / (r - 1)},\\
	H(\alpha_n) \asymp q ^ {2 \ell (r ^ n - 1) / (r - 1)}.
	\end{gather*}
	Therefore, we have \eqref{eq:main22} by Lemma \ref{lem:bestquad2}.
	If \eqref{eq:maincond3} holds, then we obtain \eqref{eq:mainequal3} by Lemma \ref{lem:bestquad}.
	
	Next, we prove (2).
	For an integer $n \geq 1$, we put
	\begin{equation*}
	\beta_n := [0, \lambda_1T, \lambda_2T, \ldots ,\lambda_{i(n) - 1}T, \overline{\lambda ^ {r ^ n}T, T ^ {[r ^ n - 1]}}],
	\end{equation*}
	Then we have
	\begin{equation*}
	\beta_n = [0, \lambda_1T, \lambda_2T, \ldots ,\lambda_{i(n) - r ^ {n - 1}}T, \overline{T ^ {[r ^ {n - 1} - 1]},\lambda T, T ^ {[r ^ n - r ^ {n - 1}]}}],
	\end{equation*}
	and $\lambda_{i(n) - r ^ {n - 1}}=\lambda$.
	In a similar way to the proof of Theorem \ref{thm:main1}, we obtain
	\begin{gather*}
	\abs{\Phi - \beta_n} \asymp q ^ {- 2 \ell (r ^ n - 1) / (r - 1) - 2 m r ^ n},\quad 
	\abs{\beta_n - \beta_n'} \asymp q ^ {- 2 \ell (r ^ n - 1) / (r - 1) + 2 r ^ {n - 1}},\\
	H(\beta_n) \asymp q ^{2 \ell (r ^ n - 1) / (r - 1) + r ^ n - 2 r ^ {n - 1}}.
	\end{gather*}
	Therefore, we have \eqref{eq:mainequal6} and \eqref{eq:mainequal66} by Lemma \ref{lem:bestquad2}.
	If \eqref{eq:maincond7} holds, then we obtain \eqref{eq:mainequal7} and \eqref{eq:mainequal77} by Lemma \ref{lem:bestquad}.
\end{proof}

\begin{proof}[Proof of Corollary \ref{cor:main2}]
	In the same way to the proof Corollary \ref{cor:main1}, we prove Corollary \ref{cor:main2}.
\end{proof}

\subsection*{Acknowledgements}
The author would like to thank the referee for helpful comments.


\end{document}